\documentclass[oneside,reqno]{amsart}
\usepackage{amsmath, amsthm, mathrsfs, amssymb, amsfonts, mathtools}
\usepackage{graphicx}
\usepackage{hyperref}
\hypersetup{colorlinks=true, linkcolor=blue, citecolor=magenta, filecolor=magenta, urlcolor=cyan}

\usepackage{doi}


\usepackage[backend=bibtex, style=trad-alpha, doi=true, isbn=false, hyperref=true, backref=false, giveninits=true, abbreviate=true, sorting=nyt]{biblatex}
\addbibresource{lib.bib}
\usepackage[dvipsnames]{xcolor}

\newtheorem{theorem}{Theorem}
\newtheorem{remark}{Remark}

\definecolor{verylightgray}{rgb}{0.9,0.9,0.9}

\def\rhoOne{\rho_{\tau}}
\def\rhoTwo{\rho_{x}}

\def\Yfin{Y}

\newtheorem{lemma}{Lemma}

\addtolength{\topmargin}{-1.5pc}
\addtolength{\textheight}{3pc}
\addtolength{\oddsidemargin}{-1.5pc}
\addtolength{\textwidth}{3pc}


\usepackage{orcidlink}

\usepackage{afterpage}

\numberwithin{equation}{section}
\mathtoolsset{showonlyrefs}

\newcommand{\codett}[1]{\texttt{\detokenize{#1}}}


\begin{document}

\title{New class of time-periodic solutions to the 1D cubic wave equation}

\author{Filip Ficek\,\orcidlink{0000-0001-5885-7064}}
\address{(F.F.) University of Vienna, Faculty of Mathematics, Oskar-Morgenstern-Platz 1, 1090 Vienna, Austria and University of Vienna, Gravitational Physics, Boltzmanngasse 5, 1090 Vienna, Austria}
\email[]{filip.ficek@univie.ac.at}
\author{Maciej Maliborski\,\orcidlink{0000-0002-8621-9761}}
\address{(M.M.) University of Vienna, Faculty of Mathematics, Oskar-Morgenstern-Platz 1, 1090 Vienna, Austria and University of Vienna, Gravitational Physics, Boltzmanngasse 5, 1090 Vienna, Austria}
\email[]{maciej.maliborski@univie.ac.at}
\thanks{We thank Roland Donninger for valuable discussions.
We acknowledge the support of the Austrian Science Fund (FWF) through Project \href{http://doi.org/10.55776/P36455}{P~36455}, Wittgenstein Award \href{http://doi.org/10.55776/Z387}{Z~387}, and the START-Project \href{http://doi.org/10.55776/Y963}{Y~963}. Computations were performed on a local cluster supported by the Gravitational Physics Group at the University of Vienna.}

\keywords{Time-periodic solutions; Nonlinear wave equation; Bifurcations}
\subjclass{Primary: 35B10; Secondary: 68V05, 35B32, 35L71}

\begin{abstract}
In recent papers \cite{Ficek.2024A, Ficek.2024B} we presented results suggesting the existence of a new class of time-periodic solutions to the defocusing cubic wave equation on a one-dimensional interval with Dirichlet boundary conditions. 
Here we confirm these findings by rigorously constructing solutions from this class.
The proof uses rational arithmetic computations to verify essential operator bounds.
\end{abstract}

\date{\today}

\maketitle

\tableofcontents

\section{Introduction}
\label{sec:Introduction}

Time-periodic solutions to the cubic wave equation on a one-dimensional interval with Dirichlet boundary conditions
\begin{equation}
    \label{eq:nlw0}
    \partial_{t}^{2}u - \partial_{x}^{2}u + u^{3} = 0\,,
    \quad
    u(t,0) = 0 = u(t,\pi)\,,
\end{equation} 
have been an object of thorough investigations since the classic paper \cite{lidskii1988periodic}. The following years brought many additional existence results obtained via different methods \cite{Berti.2006, berti2007nonlinear, berti2008cantor, GM.2004, GMP.2005}. The common feature shared by these works is that the studied solutions form a family that can be described as a Cantor-like set bifurcating from the solution of the linearised problem, see Fig.~2 in \cite{wayne1997periodic} for a simplified picture. The apparent gaps in this family come from the small divisor problem \cite{arnold1988dynamical}. In \cite{Ficek.2024A} we presented numerical results suggesting a much more complicated structure of the solutions, that was further investigated in \cite{Ficek.2024B}. The aforementioned family (together with its rescalings, see Eq.~(2.2) in \cite{Ficek.2024A}) forms just a part of this structure, that we have called the \textit{trunk}. In addition to it, there also exist solutions characterised by larger energies and a more notable presence of the higher modes. They form \textit{branches} emerging from the trunk (see Fig.~\ref{fig:EnergyNormFrequency}).

The approach employed in \cite{Ficek.2024A} was based on the Galerkin method, letting us approximate Eq.~\eqref{eq:nlw0} by finite systems of nonlinear algebraic equations. The resulting systems could then be solved numerically, leading to the discovery of the intricate patterns. By increasing the truncation in the Galerkin method, we could notice that the observed patterns remain, but also new structures start emerging from them. This led us to the conjecture that in the limit of the PDE, one is left with solutions forming an infinitely complex fractal-like structure.
In this paper, we prove the existence of three distinct solutions to Eq.~\eqref{eq:nlw0}, characterised by the same frequency, that are close (with respect to a suitable norm) to the respective approximate solutions found in \cite{Ficek.2024A}. 
One of them belongs to the family earlier covered by the literature; it lies on the trunk and is dominated by the lowest mode. The remaining two have larger energies and a more complicated modes composition. They are parts of a branch, therefore supporting the conjectures put forward in~\cite{Ficek.2024A} and~\cite{Ficek.2024B}.

Our strategy is similar to the one used in \cite{Arioli.2017}. Its main step consists of estimating bounds on certain operators. Due to the sheer number of calculations required for this, the assistance of a computer comes in handy. In \cite{Arioli.2017} the authors made use of it by applying interval arithmetic, while we perform all of the calculations on rational numbers, see for example \cite{donninger2024self}, which guarantees the computations are free from rounding errors.
In addition to this technical detail, the main difference between this work and \cite{Arioli.2017} is that our focus is on solutions that are not dominated by the lowest mode, since they shall be examples of the new class of solutions conjectured in \cite{Ficek.2024A,Ficek.2024B}.

As a first step, let us introduce the rescaled time $\tau=\Omega t$ so Eq.~\eqref{eq:nlw0} becomes
\begin{equation}
    \label{eq:nlw}
    \Omega^2\,\partial_{t}^{2}u - \partial_{x}^{2}u + u^{3} = 0\,,
    \quad
    u(t,0) = 0 = u(t,\pi)\,.
\end{equation}
Then we are looking for solutions that are $2\pi$ time-periodic in time $\tau$, i.e., $u(\tau+2\pi,x)=u(\tau,x)$. We focus here on solutions with rational frequencies that can be written as
\begin{equation}
    \label{eq:omega}
    \Omega=\frac{2p+1}{2q}\,,
\end{equation}
where $p,q\in\mathbb{N}_+$. As will become apparent, this choice lets us omit the small divisor problem. Since the considered equation has defocusing nonlinearity, we restrict ourselves only to $\Omega>1$, as this is the case covered in \cite{Ficek.2024A}. Hence, we can just assume $p>q$. Then we have the following result.
\begin{theorem}\label{thm:main}
    For $\Omega=69/40$ and each of $u_0^{(i)}$, $i=1,2,3$ given explicitly in the Supplemental Material, there exists a corresponding solution $u^{(i)}$ to Eq.~\eqref{eq:nlw} close to $u_0^{(i)}$ in the sense $\Vert u^{(i)}-u_0^{(i)} \Vert<\varepsilon^{(i)}$, where $\Vert \cdot \Vert$ is a weighted $\ell^{1}$ norm defined in Section~\ref{sec:FunctionalSpaces}, and $\varepsilon$'s are listed in \eqref{eq:25.05.21_02}. 
    The solutions $\{ \pm u^{(i)} \}_{i=1}^3$ are pairwise distinct, i.e., $u^{(i)} \ne \pm u^{(j)}$ for $i \ne j$.
\end{theorem}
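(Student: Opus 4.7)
The plan is to recast \eqref{eq:nlw} as a root-finding problem $F(u)=0$ on a Banach space of Fourier coefficients and to apply a Newton--Kantorovich / contraction argument around each approximate solution $u_0^{(i)}$. First I would expand a $2\pi$-time-periodic $u(\tau,x)$ vanishing at $x=0,\pi$ in the basis $\sin(kx)\{1,\cos(n\tau),\sin(n\tau)\}$ so that the coefficient sequence lives in the weighted $\ell^1$ space of Section~\ref{sec:FunctionalSpaces}, which is a Banach algebra under Fourier convolution and therefore turns the cubic term $u^3$ into a locally $C^1$ map $N$ in coefficient space. The linear part $Lu = \Omega^2 \partial_\tau^2 u - \partial_x^2 u$ is diagonal with symbol $k^2 - \Omega^2 n^2$, and the arithmetic choice $\Omega=(2p+1)/(2q)$ from \eqref{eq:omega} bypasses the small-divisor problem: the numerator of $k^2-\Omega^2 n^2 = ((2qk)^2-(2p+1)^2 n^2)/(2q)^2$ is a nonzero difference of an even and an odd square, so $L$ is boundedly invertible on the coefficient space with an explicit tail decay of $L^{-1}$ outside any finite window of modes.

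For each approximate solution $u_0^{(i)}$ (supported on finitely many rational Fourier modes, as supplied), I would construct an approximate inverse $A$ of the Fr\'echet derivative $DF(u_0^{(i)}) = L + 3\,(u_0^{(i)})^2\,(\cdot)$ by inverting the finite truncation $DF_N(u_0^{(i)})$ on modes $|n|,k\le N$ in exact rational arithmetic and by taking $L^{-1}$ on the complementary tail. Setting $T(u)=u-A\,F(u)$, fixed points of $T$ are solutions of \eqref{eq:nlw}, so Banach's fixed-point theorem on the closed ball $\overline{B(u_0^{(i)},\varepsilon^{(i)})}$ reduces the problem to two inequalities, each a combination of a finite rational computation with an explicit analytic tail estimate: a residual bound $\|A\,F(u_0^{(i)})\|\le (1-\kappa^{(i)})\varepsilon^{(i)}$ and a contraction bound $\sup_{u\in \overline{B(u_0^{(i)},\varepsilon^{(i)})}}\|I - A\,DF(u)\| \le \kappa^{(i)}<1$. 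Pairwise distinctness of $\{\pm u^{(i)}\}$ would then follow from the local uniqueness afforded by the contraction together with the finitely checkable disjointness of the six balls $\overline{B(\pm u_0^{(i)},\varepsilon^{(i)})}$, which uses only the symmetry $u\mapsto -u$ of \eqref{eq:nlw} and a direct comparison of the tabulated $u_0^{(i)}$ and $\varepsilon^{(i)}$.

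The main obstacle I expect is the contraction bound for the two branch solutions $u^{(2)}$ and $u^{(3)}$. Because these lie close to a bifurcation from the trunk, the linearisation $DF(u_0^{(i)})$ is near-singular, the numerical inverse $A_N$ has large norm, and $N$ must be taken large enough that the tail contribution of $3(u_0^{(i)})^2$ cannot spoil the invertibility of $I - A\,DF$ on the complement of the head. Balancing $N$, the choice of weight in the $\ell^1$ norm (which controls the cost of the cubic convolution), the radius $\varepsilon^{(i)}$, and the resulting head-matrix computations in exact rational arithmetic so that the residual and contraction inequalities hold simultaneously is the genuine technical difficulty; the trunk solution $u^{(1)}$, being dominated by the lowest mode and sitting in a regime already covered by the earlier literature, should in comparison be a significantly easier instance of the same scheme.
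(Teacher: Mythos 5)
Your overall scheme -- precondition with an approximate inverse of the linearisation, run a Banach fixed-point argument on a ball around each $u_0^{(i)}$ in the weighted $\ell^1$ space, verify a residual bound and a contraction bound by finite rational computation plus an analytic tail estimate, and get distinctness from disjointness of the balls $B_{\varepsilon^{(i)}}(\pm u_0^{(i)})$ -- is essentially the paper's proof (there the Newton-like operator is $\mathcal{N}_\Omega(h)=\mathcal{F}_\Omega(u_0+Ah)-u_0+(I-A)h$ with $A$ an approximate inverse of $I+3L_\Omega^{-1}\Lambda_{u_0^2}$ on a finite block and the identity on the tail).

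However, there is a genuine gap in your functional-analytic setup: you expand in the full basis $\sin(kx)\,\{1,\cos(n\tau),\sin(n\tau)\}$ and claim that for $\Omega=(2p+1)/(2q)$ the symbol $k^2-\Omega^2 n^2$ never vanishes because $(2qk)^2-(2p+1)^2n^2$ is ``a nonzero difference of an even and an odd square.'' That parity argument only works when $n$ is odd. For even $n$ the quantity $(2p+1)^2n^2$ is even, and exact resonances occur: $2qk=(2p+1)n$ has solutions, e.g.\ for $\Omega=69/40$ the mode $(n,k)=(40,69)$ gives $k^2-\Omega^2n^2=0$. So $L$ is \emph{not} boundedly invertible on your full coefficient space, and the whole contraction scheme as written collapses (no tail bound for $L^{-1}$, no well-defined $A$ on the complement of the head). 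The paper avoids this by working from the start in the subspace $X$ spanned only by the odd-odd modes $P_{m,n}=\cos((2m+1)\tau)\sin((2n+1)x)$; this subspace is invariant under the cubic nonlinearity (products of an odd number of such functions stay in it, Lemma~\ref{lem:triple_norm}), contains the approximate solutions $u_0^{(i)}$, and on it $4q^2(2n+1)^2-(2p+1)^2(2m+1)^2$ is odd, hence of modulus at least one, which yields the explicit bound $\Vert L_\Omega^{-1}v\Vert\le\phi(m,n)\Vert v\Vert$ of Lemma~\ref{lem:L-1} that drives all the tail estimates. Your argument can be repaired by imposing this symmetry restriction, but without it the key invertibility claim is false, not merely technically harder near the bifurcation.
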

\begin{figure}[t]
    \centering
    \includegraphics[width=0.90\linewidth]{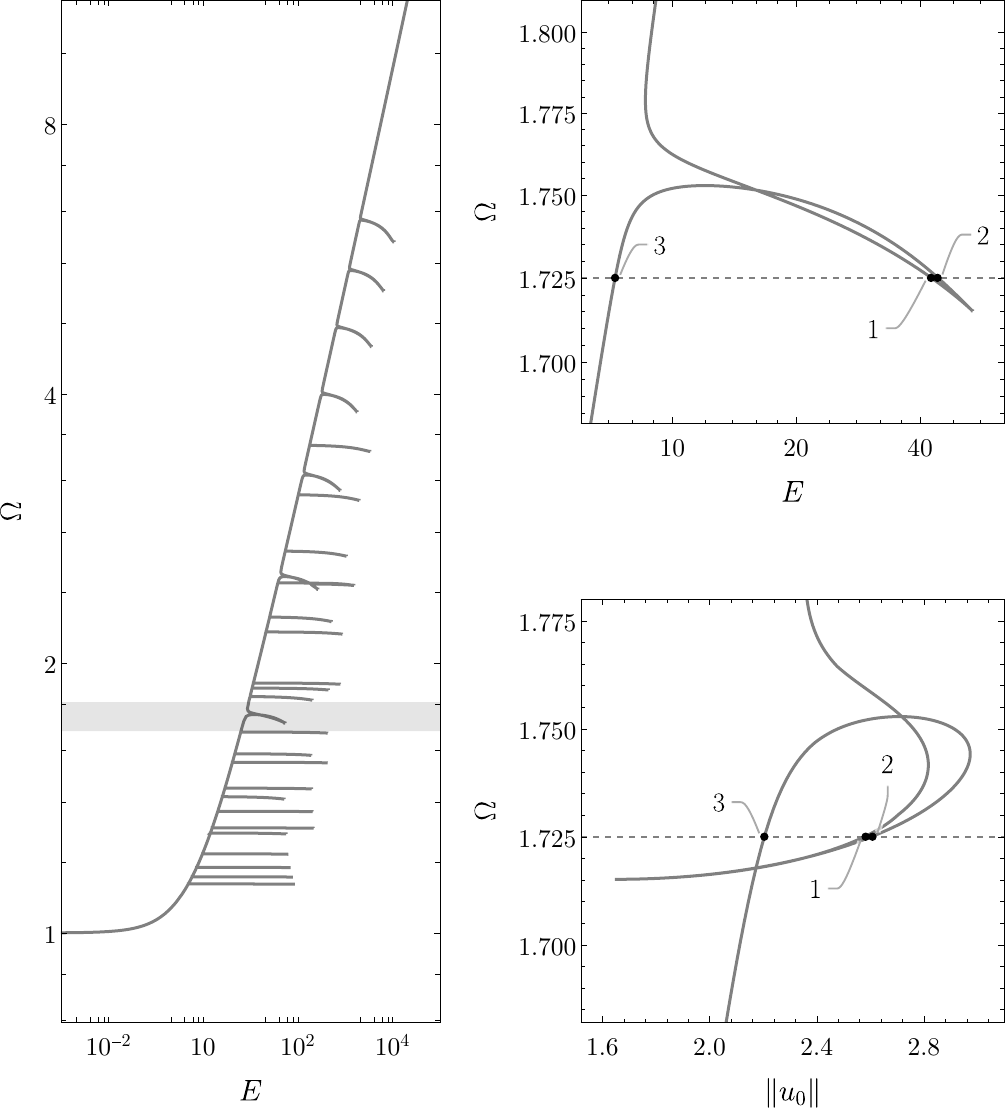}
    \caption{(Left) Frequency diagram of time-periodic solutions computed with a mode truncation $N=M = 9$ (cf.~\cite{Ficek.2024B}), illustrating the global bifurcation structure.
    (Right top) A zoomed-in region around $\Omega=69/40=1.725$, showing the locations of the three approximate solutions $u_0^{(i)}$, \mbox{$i=1,2,3$}, studied in this work. 
    (Right bottom) The same region plotted with respect to the weighted $\ell^{1}$ norm instead of energy $E$, see Eq.~(1.3) in \cite{Ficek.2024A}.}
    \label{fig:EnergyNormFrequency}
\end{figure}
\begin{remark} \normalfont
The value $\Omega=69/40$ has been selected based on Fig.~\ref{fig:EnergyNormFrequency}. On the one hand, our numerical results suggest the existence of interesting solutions (not dominated by the lowest mode) with this frequency.  
Based on these results, we find approximate solutions spanned by sufficiently many modes and then substitute their coefficients with close rational numbers to get $u_0^{(i)}$ listed in the Supplemental Material.
On the other hand, the associated $q$ is not too large, as it would increase the number of necessary calculations. 
\end{remark}

\begin{remark} \normalfont
We are convinced that the same procedure can be applied to other approximate solutions found in \cite{Ficek.2024A}, as long as their frequency $\Omega$ is in the form of \eqref{eq:omega}, resulting in additional examples of the solutions to Eq.~\eqref{eq:nlw0} belonging to this new class. In addition, we expect that by increasing the truncation in the Galerkin method, new approximate solutions with $\Omega=69/40$ emerge, that also can serve as the basis for solutions to Eq.~\eqref{eq:nlw0}. Hence, the results presented in this work are in no regard exhaustive, as typical for computer-assisted methods.
\end{remark}

\begin{remark} \normalfont  
    Recall the $\mathbb{Z}_2$ symmetry ($u\to-u$) of Eq.~\eqref{eq:nlw}. Thus, in order to ensure that the obtained solutions do not coincide, we check that $\{ \pm u^{(i)} \}_{i=1}^3$ are pairwise distinct.
\end{remark}

\begin{remark} \normalfont  
    Thanks to the symmetry between time-periodic solutions of the equations with defocusing and focusing nonlinearities, see Eq.~(2.27) of \cite{Ficek.2024A}, Theorem \ref{thm:main} automatically gives us the existence of time-periodic solutions to the focusing problem with frequency $\Omega=40/96$. 
\end{remark}

This paper can be divided into three parts. After this introduction, in Section~\ref{sec:Strategy} we present the outline of the proof of Theorem \ref{thm:main}. We introduce a proper functional setting and formulate the fixed-point argument leading to the desired result. We show that it is a simple consequence of the technical Theorem \ref{thm:cap}. The second part of the paper, i.e., Sections~\ref{sec:bounds}-\ref{sec:proof}, is devoted to its proof. Finally, the readers interested in the technical details of computer-assisted calculations are directed to the Appendices and attached Supplemental Materials.

\section{Strategy of the proof}\label{sec:Strategy}

\subsection{Functional spaces}\label{sec:FunctionalSpaces}
Let us define a family of \textit{basis functions} $P_{m,n}$
\begin{equation}
    \label{eq:Pmn}
    P_{m,n}(\tau,x):=\cos (2m+1)\tau\,\sin(2n+1)x\,,
\end{equation}
where $m$ and $n$ are non-negative integers. They can be extended to negative indices by defining
\begin{equation}\label{eqn:negative_P}
    P_{-m,n}=P_{m-1,n}\,,\qquad P_{m,-n}=-P_{m,n-1}\,,\qquad P_{-m,-n}=-P_{m-1,n-1}\,.
\end{equation}
Finite linear combinations of functions $P_{m,n}$, i.e., 
\begin{equation}\label{eq:u}
    v(\tau,x)=\sum_{\substack{0\leq m<M\\ 0\leq n< N}}\hat{v}_{m,n}\,P_{m,n}(\tau,x)\,,
\end{equation}
where $M, N\in\mathbb{N}$, constitute the vector space $\tilde{X}$. For any fixed weights $\rhoOne>1$ and $\rhoTwo>1$, we define a norm on $\tilde{X}$ by
\begin{equation}
    \label{eq:norm}
    \Vert v\Vert:=\sum_{\substack{0\leq m< M\\ 0\leq n<N}}\rhoOne^{2m+1}\rhoTwo^{2n+1}|\hat{v}_{m,n}|\,,
\end{equation}
where $v$ is given by \eqref{eq:u}. By $X$ we denote the completion of $\tilde{X}$ with respect to this norm. Then $X$ is a Banach space consisting of $2\pi$-periodic functions in both variables, that satisfy the Dirichlet boundary condition \eqref{eq:nlw}. In addition, for any $v\in X$ the Fourier coefficients decrease as
\begin{equation}
    |\hat{v}_{m,n}|<\Vert v\Vert e^{-(2m+1)\ln\rhoOne} e^{-(2n+1)\ln\rhoTwo}\,.
\end{equation}
As a result, elements of $X$ are analytic inside the strip (cf. \cite{arnold2012geometrical}), 
\begin{equation}
    \left\{(\tau,x)\in\mathbb{C}^2:\Im \tau<\log\rhoOne,\Im x<\log\rhoTwo\right\}\,,
\end{equation}
hence, they are differentiable in the real variables. Finally, as we show in Lemma \ref{lem:triple_norm}, for any $u,v,w\in X$ it holds 
\begin{equation}
    \Vert u\,v\,w\Vert\leq \Vert u\Vert  \Vert v\Vert \Vert w\Vert\,.
\end{equation}
Thus, $X$ is closed with respect to the multiplication of an odd number of its elements, which lets us easily deal with the nonlinearity.

We will denote an open ball of centre $v\in X$ and radius $r>0$ with respect to the norm $\Vert\cdot\Vert$ as $B_r(v)$. For two non-negative integers $M$ and $N$, we define the following subspaces of $X$:
\begin{align}
    \label{eq:XMN}
    X_{M,N}:=\left\{v\in X: \hat{v}_{m,n}=0\mbox{ for } m<M \mbox{ or } n<N\right\}\,,\\
    \Yfin_{M,N}:=\left\{v\in X: \hat{v}_{m,n}=0\mbox{ for } m\geq M \mbox{ or } n\geq N\right\}\,.
\end{align}
Then we have the decomposition $X= \Yfin_{M,N}\oplus{}(X_{M,0}+X_{0,N})$. Note that it is not a direct sum since $X_{M,0}$ and $X_{0,N}$ have a non-trivial intersection, see Fig.~\ref{fig:XMN}. The space of continuous linear operators on $X$ will be denoted by $\mathcal{L}(X)$.

\begin{figure}[t]
    \centering
    \includegraphics[width=0.5\linewidth]{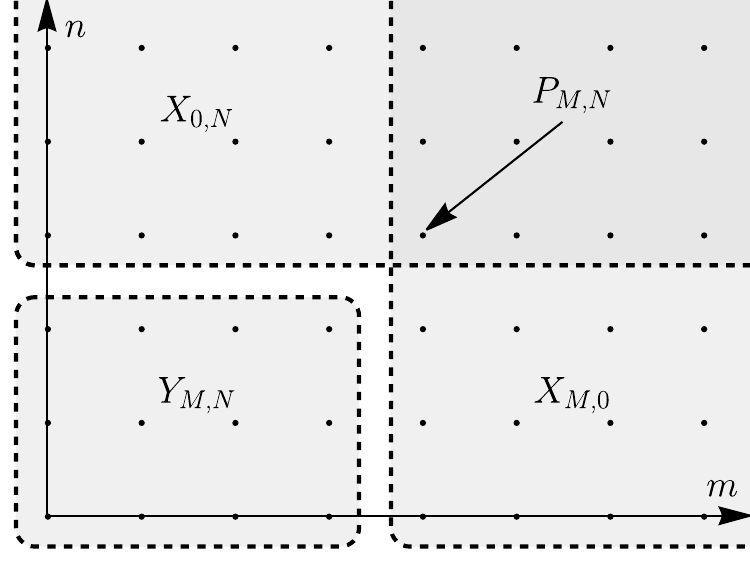}
    \caption{Diagram presenting spaces $\Yfin_{M,N}$, $X_{M,0}$, and $X_{0,N}$ for some fixed $M$ and $N$. Dots represent basis functions $P_{m,n}$ belonging to appropriate subspaces. The coordinate axes intersect at the point $(0,0)$.}
    \label{fig:XMN}
\end{figure}

\subsection{Fixed-point formulation}
\label{sec:FixedPointFormulation}
Let us denote by $L_\Omega$ a linear operator
\begin{equation}
    \label{eq:L}
    L_\Omega=\Omega^2\,\partial_\tau^2-\partial_x^2\,.
\end{equation}
Then we can rewrite \eqref{eq:nlw} as a fixed-point problem in space $X$
\begin{equation}
    \label{eq:fp}
    u=-L_\Omega^{-1}u^3=:\mathcal{F}_\Omega(u)\,.
\end{equation}
As Lemma \ref{lem:L-1} shows, $L_\Omega^{-1}$ is a well-defined, bounded linear operator in $X$. We can represent the sought solution as $u=u_0+Ah$, where $u_0\in X$ will be the approximation to the exact solution, $A:X\to X$ is a linear, continuous isomorphism to be fixed later, and $h\in X$ is the new unknown. Then we define a nonlinear operator $\mathcal{N}_\Omega$ acting on $X$
\begin{equation}
    \label{eq:fp_2}
    \mathcal{N}_\Omega(h):=\mathcal{F}_\Omega(u_0+Ah)-u_0+(I-A)h\,.
\end{equation}
Clearly $h$ is a fixed point of $\mathcal{N}_\Omega$ if and only if $u=u_0+Ah$ is a fixed point of $\mathcal{F}_\Omega$. Let us introduce a linear $H_0:X\to X$
\begin{align}\label{eq:H0}
    H_0(h)=-3 L_\Omega^{-1}\left(u_0^2 \,Ah\right)+h-Ah\,,
\end{align}
then for any $h_1, h_2\in X$ we have
\begin{align}
    \mathcal{N}_\Omega(h_1)-\mathcal{N}_\Omega(h_2)=&H_0(h_1-h_2)-3L_\Omega^{-1}\left(u_0\,(Ah_1+Ah_2)(Ah_1-A h_2)\right)\\
    &-L_\Omega^{-1}\left[\left((Ah_1)^2+Ah_1\,Ah_2+(Ah_2)^2\right)(Ah_1-A h_2)\right]\,.
\end{align}
If we restrict to $h_1,h_2\in B_\delta(0)$, then it holds
\begin{align}\label{eq:NN_norm}
    \left\Vert\mathcal{N}_\Omega(h_1)-\mathcal{N}_\Omega(h_2)\right\Vert\leq
    \left(\Vert H_0\Vert+6\left\Vert L_\Omega^{-1}\right\Vert \Vert u_0\Vert \Vert A \Vert^2 \delta+3\left\Vert L_\Omega^{-1}\right\Vert \Vert A \Vert^3 \delta^2 \right)\Vert h_1-h_2\Vert\,.
\end{align}

\subsection{Proof of Theorem 1}

Let us now assume that the following result holds.

\begin{theorem}\label{thm:cap}
    For $\Omega=69/40$ and each of $u_0\in\{u_0^{(1)},u_0^{(2)},u_0^{(3)}\}$, provided explicitly in the Supplemental Material, there exists a respective linear isomorphism $A:X\to X$ and resulting $H_0$, given by Eq.~\eqref{eq:H0}, such that one can find positive constants $\delta$ and $K_0$ satisfying
\begin{equation}
    \label{eq:25.05.21_05}
    \Vert H_0\Vert+6\left\Vert L_\Omega^{-1}\right\Vert \Vert u_0\Vert \Vert A \Vert^2 \delta+3\left\Vert L_\Omega^{-1}\right\Vert \Vert A \Vert^3 \delta^2 <K_0<1\,,\qquad
    \Vert \mathcal{N}_\Omega(0)\Vert <(1-K_0)\delta
    \,.
\end{equation}
In addition, if we introduce $\varepsilon:=\Vert A\Vert\delta$, then the values $\varepsilon^{(i)}$ for respective $u_0^{(i)}$ satisfy
\begin{align}\label{eq:u-u0}
   \left\Vert u_0^{(i)} \pm u_0^{(j)}\right\Vert >\varepsilon^{(i)}+\varepsilon^{(j)}\,
\end{align}
for $i,j\in\{1,2,3\}$ and $i\neq j$.
\end{theorem}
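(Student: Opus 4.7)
The plan is to prove Theorem \ref{thm:cap} by a computer-assisted argument: each of the five quantities appearing in \eqref{eq:25.05.21_05} is bounded above by an explicit rational number, and the inequalities are then verified in exact rational arithmetic for suitable rational choices of $K_0$ and $\delta$.

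First I would construct $A$ as a Newton-like preconditioner adapted to $u_0$. Since each $u_0^{(i)}$ is a finite trigonometric polynomial living in some $Y_{M,N}$, the linearisation $h\mapsto h+3L_\Omega^{-1}(u_0^2 h)$ is a compact perturbation of the identity on $X$. I would fix truncation indices $\tilde M,\tilde N$ and let $\Pi$ denote the projection onto $Y_{\tilde M,\tilde N}$. Then $A$ is defined to agree on $Y_{\tilde M,\tilde N}$ with a rational numerical inverse of $\Pi\bigl(I+3L_\Omega^{-1}(u_0^2\,\cdot)\bigr)\Pi$, and to act as the identity on $X_{\tilde M,0}+X_{0,\tilde N}$. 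By construction $A$ is a bounded isomorphism of $X$, and by \eqref{eq:H0} the operator $H_0$ is small on the finite block (where $A$ approximately inverts the linearisation), while on the tail it reduces to $-3L_\Omega^{-1}(u_0^2\,\cdot)$, which is controlled by the exponential decay built into the weighted norm \eqref{eq:norm}.

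Next I would bound $\Vert u_0\Vert$, $\Vert L_\Omega^{-1}\Vert$, $\Vert A\Vert$, $\Vert \mathcal{N}_\Omega(0)\Vert$, and $\Vert H_0\Vert$ separately. The first is a finite sum of rationals. The second uses Lemma \ref{lem:L-1} together with the fact that $L_\Omega$ acts diagonally on the basis $P_{m,n}$ with eigenvalues $(2n+1)^2-\Omega^2(2m+1)^2$; the choice $\Omega=69/40$ forces these eigenvalues to remain uniformly bounded away from zero, with the worst case attained at an enumerable finite set of indices and no small-divisor accumulation. The norm $\Vert A\Vert$ splits as the maximum of the explicitly computed matrix norm on $Y_{\tilde M,\tilde N}$ and $1$, since $A$ equals the identity on the tail. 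The quantity $\mathcal{N}_\Omega(0)=-L_\Omega^{-1}(u_0^3)-u_0$ is a finite computation, because $u_0^3$ has finitely many Fourier modes. The central step is bounding $\Vert H_0\Vert$: I would estimate $\Vert H_0 P_{m,n}\Vert$ uniformly in $(m,n)$ by splitting into (a) the finite block $Y_{\tilde M,\tilde N}$, where $H_0$ is a concrete matrix evaluated entrywise in rational arithmetic, and (b) the tail, where $A=I$ and Lemma \ref{lem:triple_norm} together with the weighted structure of \eqref{eq:norm} yields a decay like $\rhoOne^{-2\tilde M}+\rhoTwo^{-2\tilde N}$ times an explicit constant depending only on $\Vert u_0\Vert$ and $\Vert L_\Omega^{-1}\Vert$.

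With explicit rational bounds $\eta,\lambda,\mu,\alpha,\nu$ in hand, the remaining work is elementary: the first inequality of \eqref{eq:25.05.21_05} is a quadratic in $\delta$, namely $\eta+6\lambda\mu\alpha^2\delta+3\lambda\alpha^3\delta^2<K_0<1$, which admits a nonempty rational solution set once $\eta$ is verified strictly less than $1$; the second inequality $\nu<(1-K_0)\delta$ fixes a rational lower bound on $\delta$. I would pick $K_0$ slightly above $\eta$ and a rational $\delta$ in the resulting window, all comparisons performed in exact arithmetic. To obtain \eqref{eq:u-u0} I would directly compute $\Vert u_0^{(i)}\pm u_0^{(j)}\Vert$ as a finite weighted sum and compare against $\varepsilon^{(i)}+\varepsilon^{(j)}=\alpha^{(i)}\delta^{(i)}+\alpha^{(j)}\delta^{(j)}$. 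The main obstacle, and the reason for the computer assistance, is controlling $\Vert H_0\Vert$ on the finite block while keeping $\tilde M,\tilde N$ large enough to make the tail contribution small: the underlying matrix couples $u_0^2 P_{m,n}$, $L_\Omega^{-1}$, and the numerical $A$ on a subspace whose dimension must grow with $\tilde M,\tilde N$, and doing this in exact rational arithmetic while $\rhoOne,\rhoTwo$ exceed $1$ only modestly is the delicate balance that the pipeline described in the Appendices and Supplemental Material is designed to handle.
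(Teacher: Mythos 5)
Your overall architecture matches the paper's: the same block-diagonal preconditioner $A$ (a rationalised approximate inverse of the projected operator $I+3\Pi L_\Omega^{-1}\Lambda_{u_0^2}$ on a finite block, identity on the complement), explicit rational bounds on $\Vert u_0\Vert$, $\Vert A\Vert$, $\Vert L_\Omega^{-1}\Vert$, $\Vert \mathcal{N}_\Omega(0)\Vert=\Vert L_\Omega^{-1}u_0^3+u_0\Vert$, a split of $\Vert H_0\Vert$ into a finite block evaluated entrywise and a tail where $A=I$, and a final exact-arithmetic check of \eqref{eq:25.05.21_05} and of $\Vert u_0^{(i)}\pm u_0^{(j)}\Vert>\varepsilon^{(i)}+\varepsilon^{(j)}$.

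However, your tail estimate for $\Vert H_0\Vert$ contains a genuine gap. You claim that on the tail, where $H_0P_{m,n}=-3L_\Omega^{-1}(u_0^2P_{m,n})$, Lemma~\ref{lem:triple_norm} together with "the weighted structure of the norm" gives a bound of the form $\bigl(\rhoOne^{-2\tilde M}+\rhoTwo^{-2\tilde N}\bigr)$ times a constant depending only on $\Vert u_0\Vert$ and $\Vert L_\Omega^{-1}\Vert$. No such decay factor is available: Lemma~\ref{lem:triple_norm} only yields $\Vert L_\Omega^{-1}(u_0^2P_{m,n})\Vert\le\Vert L_\Omega^{-1}\Vert\,\Vert u_0\Vert^2\,\rhoOne^{2m+1}\rhoTwo^{2n+1}$, so the ratio $\Vert H_0P_{m,n}\Vert/(\rhoOne^{2m+1}\rhoTwo^{2n+1})$ is bounded only by $3\Vert L_\Omega^{-1}\Vert\,\Vert u_0\Vert^2$, uniformly in $(m,n)$, with no gain as $m,n\to\infty$; the weights multiply $P_{m,n}$ and its image by comparable factors, so they cannot produce $\rhoOne^{-2\tilde M}$ suppression. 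Numerically this bound is hopeless: with $\Omega=69/40$ one has $\Vert L_\Omega^{-1}\Vert\le 6400/137\approx 47$ and $\Vert u_0\Vert\approx 2.5$, giving a tail bound of order $10^{3}$, while the contraction requires it to be below $1$; and since the weights actually used are $\rhoOne=\rhoTwo=1+10^{-20}$, your claimed exponential factor would be useless even if it existed. The mechanism that actually closes the argument is different: for $m\ge\tilde M$ (or $n\ge\tilde N$) the product $u_0^2P_{m,n}$ lies in the subspace $X_{m-(2M-1),0}$ (resp.\ $X_{0,n-(2N-1)}$), and the refined statement of Lemma~\ref{lem:L-1} gives $\Vert L_\Omega^{-1}v\Vert\le\phi(m-(2M-1),0)\Vert v\Vert$ there, with $\phi$ decaying like $1/m$ because those modes sit far from the resonance line $2q(2n+1)=(2p+1)(2m+1)$. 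This is exactly Lemma~\ref{lem:uvP_norm} and the terms $\phi(\tilde M-(2M-1),0)\,C$ and $\phi(0,\tilde N-(2N-1))\,C$ in \eqref{eq:25.05.20_03}--\eqref{eq:25.05.20_04}; choosing $\tilde M,\tilde N$ large enough makes these smaller than $1$. Without this refinement your step (b) fails, and the first inequality in \eqref{eq:25.05.21_05} cannot be verified. (A minor further difference: the paper keeps the block size $\mu,\nu$ of $\mathcal{A}$ distinct from the truncation $\tilde M,\tilde N$ used in the $H_0$ bound, with an intermediate range where $A=I$ but the norm is still computed explicitly; merging them as you do is not wrong, only computationally wasteful.)
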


Then the Theorem~\ref{thm:main} immediately follows from Theorem~\ref{thm:cap}.

\begin{proof}[Proof of Theorem \ref{thm:main}]
Theorem \ref{thm:cap} implies that $\mathcal{N}_\Omega$ can be considered as a map \mbox{$B_{\delta}(0)\to B_\delta (0)$}, since
\begin{align}
\label{eq:25.05.21_04}
    \left\Vert\mathcal{N}_\Omega(h)\right\Vert\leq\left\Vert\mathcal{N}_\Omega(h)-\mathcal{N}_\Omega(0)\right\Vert+ \left\Vert\mathcal{N}_\Omega(0)\right\Vert < K_0 \delta+(1-K_0)\delta=\delta\,.
\end{align}
In addition, from Eq.~\eqref{eq:NN_norm} we infer that $\mathcal{N}_\Omega$ is a contraction inside $B_{\delta}(0)$. As a result, we can employ the Banach contraction principle to find the fixed point $h$ of $\mathcal{N}_\Omega$. It leads to the solution $u$ \eqref{eq:nlw} expressed as $u=u_0+Ah$. Since $h\in B_\delta(0)$, the distance between the solution $u$ and its approximation $u_0$ is bounded by
\begin{align}\label{eq:epsilon}
   \Vert u-u_0\Vert\leq\Vert A h\Vert\leq \Vert A \Vert \delta = \varepsilon\,.
\end{align}
Then the fact that the solutions $\{\pm u^{(i)}\}_{i=1}^3$ are distinct follows from \eqref{eq:u-u0}, since for $i,j\in\{1,2,3\}$ and $i\neq j$ we have
\begin{align}
   \left\Vert u^{(i)}\pm u^{(j)}\right\Vert\geq \left\Vert u_0^{(i)}\pm u_0^{(j)}\right\Vert-\left\Vert u^{(i)}-u_0^{(i)}\right\Vert-\left\Vert u^{(j)}-u_0^{(j)}\right\Vert>0\,.
\end{align}
\end{proof}

The remaining part of this paper is devoted to the proof of Theorem \ref{thm:cap}. In Section~\ref{sec:bounds} we state four lemmas letting us control the norms of the considered operators. In order for our fixed-point argument to close, we need to properly construct an auxiliary operator $A$. This matter is discussed in Section~\ref{sec:A}. Results of these two sections lead us to expressions for the bounds on the norm of operator $H_0$, derived in Section~\ref{sec:H0}. These bounds are then calculated explicitly with the assistance of a computer using rational arithmetic, as described in Section~\ref{sec:ComputerAssistance}. Finally, in Section~\ref{sec:proof} we combine the obtained bounds to show the existence of suitable values of $K_0$ and $\delta$ and we verify \eqref{eq:u-u0}, thus concluding the proof.

\section{Bounds on operator norms}\label{sec:bounds}
We begin this section, devoted to deriving bounds on specific operator norms, by establishing an estimate for the triple product of elements in $X$.
\begin{lemma}\label{lem:triple_norm}
    Let $u,v,w\in X$, then
    \begin{align}
        \Vert u\, v\, w\Vert\leq \Vert u\Vert\,\Vert v\Vert\,\Vert w \Vert\,.
    \end{align}
\end{lemma}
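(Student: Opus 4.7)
The plan is to establish the inequality first for $u, v, w$ in the dense subspace $\tilde X$ of finite linear combinations of the $P_{m,n}$, and then to extend by density. Trilinear expansion of $uvw$ combined with subadditivity of the norm reduces the task to the basis-level estimate $\|P_{m_1,n_1}P_{m_2,n_2}P_{m_3,n_3}\|\le\prod_i\|P_{m_i,n_i}\|$ for arbitrary non-negative indices, noting that $\|P_{m,n}\|=\rho_\tau^{2m+1}\rho_x^{2n+1}$.

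The core computation is the product-to-sum expansion applied separately to the triple cosine in $\tau$ and the triple sine in $x$: each factorisation produces four terms, and the combined expansion reads
\[
  P_{m_1,n_1}P_{m_2,n_2}P_{m_3,n_3}=\frac{1}{16}\sum_{\epsilon,\sigma\in\{\pm1\}^3}\eta_{\epsilon,\sigma}\,\widetilde P_{\tilde m_\epsilon,\tilde n_\sigma},
\]
a signed sum of $16$ basis terms with $\eta_{\epsilon,\sigma}\in\{\pm1\}$, $2\tilde m_\epsilon+1=\sum_i\epsilon_i(2m_i+1)$, $2\tilde n_\sigma+1=\sum_i\sigma_i(2n_i+1)$, and $\widetilde P$ allowing negative indices (the $2^6=64$ sign-tuples collapse to $16$ because $\cos$ is even and $\sin$ is odd). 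Because a signed sum of three odd integers is odd and hence nonzero, one has $|2\tilde m_\epsilon+1|\ge1$ and $|2\tilde n_\sigma+1|\ge1$; the extension rules \eqref{eqn:negative_P} then rewrite each $\widetilde P_{\tilde m,\tilde n}$ as $\pm P_{m',n'}$ with $m',n'\ge0$ satisfying $2m'+1=|2\tilde m+1|$ and $2n'+1=|2\tilde n+1|$.

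Applying the triangle inequality gives $|2\tilde m_\epsilon+1|\le\sum_i(2m_i+1)$ and $|2\tilde n_\sigma+1|\le\sum_i(2n_i+1)$; since $\rho_\tau,\rho_x>1$, summing the $16$ terms (each weighted by $\tfrac{1}{16}$) yields the basis-level bound, and unraveling the trilinear expansion delivers $\|uvw\|\le\|u\|\|v\|\|w\|$ on $\tilde X$. The telescoping identity $u_k v_k w_k-u_j v_j w_j=(u_k-u_j)v_k w_k+u_j(v_k-v_j)w_k+u_j v_j(w_k-w_j)$ shows that for sequences $u_k,v_k,w_k\in\tilde X$ converging in $X$ the products form a Cauchy sequence whose limit---coinciding with pointwise multiplication, since elements of $X$ are analytic in a strip---satisfies the same bound. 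The only real obstacle is the combinatorial bookkeeping of the $16$ signed terms and verifying the correct application of the triangle inequality to signed sums of odd integers; no conceptual difficulty arises.
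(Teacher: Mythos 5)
Your proposal is correct and follows essentially the same route as the paper: the product-to-sum expansion of the triple product into $16$ signed basis terms with the $1/16$ prefactor, the basis-level bound $\Vert P_{m_1,n_1}P_{m_2,n_2}P_{m_3,n_3}\Vert\le\rho_\tau^{2(m_1+m_2+m_3)+3}\rho_x^{2(n_1+n_2+n_3)+3}$ using $\rho_\tau,\rho_x>1$ and the negative-index rules \eqref{eqn:negative_P}, then trilinearity on $\tilde X$ and passage to $X$ by completion. Your telescoping argument for the density step is merely a more explicit version of the paper's concluding remark, not a different approach.
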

\begin{proof}
    The decomposition of a triple product of $P_{m,n}$ leads to 
    \begin{align}
        P_{m_1,n_1}\,P_{m_2,n_2}\,P_{m_3,n_3}=&\frac{1}{16}\left[-P_{m_1+m_2+m_3+1,n_1+n_2+n_3+1}-P_{-m_1+m_2+m_3,n_1+n_2+n_3+1}\right.\\
        &-P_{m_1-m_2+m_3,n_1+n_2+n_3+1}- P_{m_1+m_2-m_3,n_1+n_2+n_3+1}\\
        &+P_{m_1+m_2+m_3+1,-n_1+n_2+n_3}+P_{-m_1+m_2+m_3,-n_1+n_2+n_3}\\
        &+P_{m_1-m_2+m_3,-n_1+n_2+n_3}+ P_{m_1+m_2-m_3,-n_1+n_2+n_3}\\
        &+P_{m_1+m_2+m_3+1,n_1-n_2+n_3}+P_{-m_1+m_2+m_3,n_1-n_2+n_3}\\
        &+P_{m_1-m_2+m_3,n_1-n_2+n_3}+ P_{m_1+m_2-m_3,n_1-n_2+n_3}\\
        &+P_{m_1+m_2+m_3+1,n_1+n_2-n_3}+P_{-m_1+m_2+m_3,n_1+n_2-n_3}\\
        &\left.+P_{m_1-m_2+m_3,n_1+n_2-n_3}+ P_{m_1+m_2-m_3,n_1+n_2-n_3}\right]\,,
    \end{align}
where potential negative indices are covered by \eqref{eqn:negative_P}. Thus, it holds
\begin{align}
        \left\Vert P_{m_1,n_1}\,P_{m_2,n_2}\,P_{m_3,n_3}\right\Vert \leq \rhoOne^{2m_1+2m_2+2m_3+3}\rhoTwo^{2n_1+2n_2+2n_3+3}\,.
\end{align} 

Now assume that $u,v,w\in \tilde{X}$, so we have finite decompositions
    \begin{align}
        u=\sum_{\substack{0\leq m< M_1\\ 0\leq n<N_1}} \hat{u}_{m,n}P_{m,n}\,,\qquad 
        v=\sum_{\substack{0\leq m< M_2\\ 0\leq n<N_2}} \hat{v}_{m,n}P_{m,n}\,,\quad 
        w=\sum_{\substack{0\leq m< M_3\\ 0\leq n<N_3}} \hat{w}_{m,n}P_{m,n}\,.
    \end{align}
Then
    \begin{align}
        \Vert u\,v\,w\Vert&\leq 
        \sum_{\substack{0\leq m_1< M_1\\ 0\leq n_1<N_1}}
        \sum_{\substack{0\leq m_2< M_2\\ 0\leq n_2<N_2}} 
        \sum_{\substack{0\leq m_3< M_3\\ 0\leq n_3<N_3}} 
        \left|\hat{u}_{m_1,n_1} \hat{v}_{m_2,n_2} \hat{w}_{m_3,n_3}\right| \left\Vert P_{m_1,n_1}P_{m_2,n_2} P_{m_3,n_3}\right\Vert\\
        &\leq
        \sum_{\substack{0\leq m_1 < M_1\\ 0\leq n_1 < N_1}} \sum_{\substack{0\leq m_2 < M_2\\ 0\leq n_2 < N_2}} \sum_{\substack{0\leq m_3 < M_3\\ 0\leq n_3 < N_3}} \left|\hat{u}_{m_1,n_1} \hat{v}_{m_2,n_2} \hat{w}_{m_3,n_3}\right|\\
        &\qquad\qquad\qquad\qquad\qquad\qquad\qquad\qquad \times \rhoOne^{2m_1+2m_2+2m_3+3}\rhoTwo^{2n_1+2n_2+2n_3+3} \\
        &=\left(\sum_{\substack{0\leq m_1< M_1\\ 0\leq n_1 < N_1}}\rhoOne^{2m_1+1}\rhoTwo^{2n_1+1} \left|\hat{u}_{m_1,n_1}\right| \right) \left(\sum_{\substack{0\leq m_2 < M_2\\ 0\leq n_2 < N_2}} \rhoOne^{2m_2+1}\rhoTwo^{2n_2+1}\left|\hat{v}_{m_2,n_2}\right| \right) \\
        &\quad \times\left(\sum_{\substack{0\leq m_3 < M_3\\ 0\leq n_3< N_3}} \rhoOne^{2m_3+1}\rhoTwo^{2n_3+1}\left|\hat{u}_{m_1,n_1}\right| \right)
        =\Vert u\Vert\,\Vert v\Vert\,\Vert w \Vert\,.
    \end{align}
    As $X$ is the completion of $\tilde{X}$, this concludes the proof.
\end{proof}

Since $\Vert\cdot\Vert$ is a weighted $\ell^1$ norm, any continuous linear operator has a norm that can be bounded from above as follows.
\begin{lemma}\label{lem:lin_norm}
Let $\mathcal{H}\in \mathcal{L}(X)$, then
\begin{align}\label{eq:lin_norm}
    \Vert \mathcal{H}\Vert\leq \sup_{m,n}\frac{\left\Vert \mathcal{H}\left(P_{m,n}\right)\right\Vert}{\rhoOne^{2m+1} \rhoTwo^{2n+1}} \,.
\end{align}    
\end{lemma}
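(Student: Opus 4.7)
The plan is to prove the bound by testing $\mathcal{H}$ on the basis functions $P_{m,n}$ and then leveraging the weighted $\ell^1$ structure of the norm. First I would set $C := \sup_{m,n}\|\mathcal{H}(P_{m,n})\|/(\rhoOne^{2m+1}\rhoTwo^{2n+1})$, noting that if $C=+\infty$ the inequality is trivial, so we may assume $C<\infty$.

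Next, I would work on the dense subspace $\tilde{X}$. Pick $v\in\tilde{X}$ with the finite expansion $v=\sum_{m,n}\hat{v}_{m,n}P_{m,n}$. By linearity of $\mathcal{H}$ and the triangle inequality for $\|\cdot\|$,
\begin{align}
\|\mathcal{H}(v)\| = \Bigl\|\sum_{m,n}\hat{v}_{m,n}\,\mathcal{H}(P_{m,n})\Bigr\| \leq \sum_{m,n}|\hat{v}_{m,n}|\,\|\mathcal{H}(P_{m,n})\|.
\end{align}
By the definition of $C$ we have $\|\mathcal{H}(P_{m,n})\|\le C\,\rhoOne^{2m+1}\rhoTwo^{2n+1}$ for each $(m,n)$, so
\begin{align}
\|\mathcal{H}(v)\| \leq C\sum_{m,n}\rhoOne^{2m+1}\rhoTwo^{2n+1}|\hat{v}_{m,n}| = C\,\|v\|,
\end{align}
which is exactly the desired operator bound restricted to $\tilde{X}$.

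Finally, I would extend the estimate from $\tilde{X}$ to $X$ by density. Since $X$ is the completion of $\tilde{X}$ with respect to $\|\cdot\|$, any $v\in X$ is a $\|\cdot\|$-limit of some sequence $v_k\in\tilde{X}$. Continuity of $\mathcal{H}\in\mathcal{L}(X)$ yields $\mathcal{H}(v_k)\to\mathcal{H}(v)$, and continuity of the norm combined with $\|\mathcal{H}(v_k)\|\le C\|v_k\|$ passes the bound to the limit, giving $\|\mathcal{H}(v)\|\le C\|v\|$ for all $v\in X$. Taking the supremum over the unit ball yields $\|\mathcal{H}\|\le C$, as required.

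This proof is essentially routine, and I do not expect any real obstacle: the only subtle point is ensuring that formal manipulation of the infinite expansion on $X$ is justified, which is handled cleanly by first proving the inequality on the dense subspace $\tilde{X}$ of finite linear combinations and then invoking the continuity of $\mathcal{H}$ to extend.
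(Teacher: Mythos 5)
Your proof is correct and follows essentially the same route as the paper: test $\mathcal{H}$ on the basis functions $P_{m,n}$ and exploit the weighted $\ell^{1}$ structure of the norm. The only difference is cosmetic --- the paper applies the bound directly to the infinite expansion of $h\in X$ (implicitly using continuity of $\mathcal{H}$), whereas you make the density-and-limit step on $\tilde{X}$ explicit, which is a harmless refinement of the same argument.
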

\begin{proof}
Any $h\in X$ can be represented as
\begin{align}
   h=\sum_{m,n}\hat{h}_{m,n}\, P_{m,n}\,.
\end{align}    
Then we have
\begin{align}
\left\Vert\mathcal{H}\left(h\right)\right\Vert& \leq \sum_{m,n}\left|\hat{h}_{m,n}\right|\left\Vert \mathcal{H}\left(P_{m,n}\right)\right\Vert = \sum_{m,n}\left( \rhoOne^{2m+1}\rhoTwo^{2n+1}\left|\hat{h}_{m,n}\right|\right)\frac{\left\Vert \mathcal{H}\left(P_{m,n}\right) \right\Vert}{\rhoOne^{2m+1} \rhoTwo^{2n+1}}\\
&\leq \left(\sup_{m,n}\frac{\left\Vert \mathcal{H}\left(P_{m_1,n_1}\right)\right\Vert}{\rhoOne^{2m+1} \rhoTwo^{2n+1}}\right)\sum_{m,n} \rhoOne^{2m+1} \rhoTwo^{2n+1}\left|\hat{h}_{m,n}\right|= \left(\sup_{m,n}\frac{\left\Vert \mathcal{H}\left(P_{m,n}\right)\right\Vert}{\rhoOne^{2m+1} \rhoTwo^{2n+1}}\right)\left\Vert h\right\Vert\,.
\end{align}    
\end{proof}

Lemma \ref{lem:lin_norm} is the main tool in finding bounds on norms of operators $A$ and $H_0$ in Sections~\ref{sec:A} and \ref{sec:H0}, respectively. In this process, we need proper control over the $L_\Omega^{-1}$. It is ensured by the following result.

\begin{lemma}\label{lem:L-1}
    The operator $L_\Omega:X\to X$ has a continuous inverse. In addition, for any $v\in X_{m,n}$ it holds $\left\Vert L^{-1}_\Omega  v\right\Vert\leq \phi(m,n)\Vert v\Vert$, where
    \begin{equation}
        \phi(m,n)=\frac{4q^2}{2\max\left(2q(2n+1),(2p+1)(2m+1)\right)-1}\,.
    \end{equation}
\end{lemma}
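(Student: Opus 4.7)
The plan is to diagonalise $L_\Omega$ on the basis $\{P_{m,n}\}$ and read off $L_\Omega^{-1}$ as a Fourier multiplier, then verify invertibility and the claimed bound by a direct calculation on each basis function, extending by continuity via Lemma~\ref{lem:lin_norm}. A short computation gives
\begin{equation}
    L_\Omega P_{m,n} = \bigl[(2n+1)^{2} - \Omega^{2}(2m+1)^{2}\bigr] P_{m,n} = \frac{[2q(2n+1)]^{2} - [(2p+1)(2m+1)]^{2}}{4q^{2}}\, P_{m,n},
\end{equation}
after substituting $\Omega = (2p+1)/(2q)$. This suggests defining $L_\Omega^{-1}$ as the diagonal operator with reciprocal eigenvalues.

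The key observation, which is exactly what the rational-frequency choice \eqref{eq:omega} is designed to exploit, is a parity argument: $a := 2q(2n+1)$ is even while $b := (2p+1)(2m+1)$ is odd, so $a-b$ is a nonzero odd integer and hence $|a-b|\geq 1$. In particular $L_\Omega$ has trivial kernel on every $P_{m,n}$, so $L_\Omega^{-1}$ is well defined on $\tilde{X}$, and no small divisors can arise. I expect this parity step to be the conceptual crux; the rest is bookkeeping.

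To recover the explicit $\phi(m,n)$, I would establish the elementary inequality $|a^{2}-b^{2}| \geq 2\max(a,b)-1$ whenever $a,b$ are positive integers of opposite parity. Writing $|a^{2}-b^{2}| = |a-b|(a+b)$ and splitting on whether $|a-b|=1$ or $|a-b|\geq 3$ (the next possible odd value) handles both cases: in the first, $a+b = 2\max(a,b)-1$; in the second, $|a-b|(a+b) \geq 3\max(a,b) > 2\max(a,b)-1$. Combining this with the diagonal form yields
\begin{equation}
    \Vert L_\Omega^{-1} P_{m,n}\Vert \leq \phi(m,n)\, \Vert P_{m,n}\Vert.
\end{equation}

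Finally, since $\phi(m,n)$ is nonincreasing in each argument and bounded by $\phi(0,0) = 4q^{2}/(2\max(2q,2p+1)-1)$, Lemma~\ref{lem:lin_norm} applied to $\mathcal{H}=L_\Omega^{-1}$ on $\tilde{X}$ shows $\Vert L_\Omega^{-1}\Vert \leq \phi(0,0)$, and $L_\Omega^{-1}$ extends continuously to all of $X$. For $v \in X_{m,n}$, only modes with indices $m'\geq m$ and $n'\geq n$ appear in the expansion of $v$, so the same estimate used in the proof of Lemma~\ref{lem:lin_norm} --- combined with monotonicity of $\phi$ in both arguments --- immediately gives $\Vert L_\Omega^{-1} v\Vert \leq \phi(m,n)\Vert v\Vert$, completing the proof.
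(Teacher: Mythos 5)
Your proposal is correct and follows essentially the same route as the paper: diagonalise $L_\Omega$ on the basis $P_{m,n}$, use the even-minus-odd parity of $4q^2(2n+1)^2-(2p+1)^2(2m+1)^2$ to rule out small divisors, prove the elementary bound $|t^2-s^2|\geq 2\max(t,s)-1$, and conclude via monotonicity of $\phi$ together with the Lemma~\ref{lem:lin_norm}-type summation. The only (immaterial) difference is that you prove the elementary inequality by factoring $|a-b|(a+b)$ and splitting on $|a-b|=1$ versus $|a-b|\geq 3$, whereas the paper minimises $t^2-s^2$ over $s\in(0,t-1]$ for real $t,s$ with $|t-s|\geq 1$.
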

\begin{proof}
    Let $t$ and $s$ be positive numbers such that $|t-s|\geq 1$. Then the following inequality holds
\begin{equation}
\label{eq:ineq}
    |t^2-s^2|\geq 2\max(t,s)-1\,.
\end{equation}
To prove it, assume that $s<t$ and consider a function $f(s)=t^2-s^2$ defined on an interval $s\in(0,t-1]$ for some fixed $t$. The function $f$ achieves its minimum at $s=t-1$, hence $(t^2-s^2)\geq 2t-1$. The case when $s>t$ is analogous, leading to \eqref{eq:ineq}.

Now, for any $P_{m,n}$ we have
\begin{equation}
L^{-1}_\Omega P_{m,n}=\frac{1}{(2n+1)^2-\Omega^2 (2m+1)^2}P_{m,n}=\frac{4q^2}{4q^2(2n+1)^2-(2p+1)^2(2m+1)^2}P_{m,n}\,.
\end{equation}
Since the denominator of the fraction is a difference of an even and odd number, it is at least equal to one and $L^{-1}_\Omega P_{m,n}$ is well defined. In addition, by setting $t=2q\,(2n+1)$, $s=(2p+1)(2m+1)$ in \eqref{eq:ineq}, we get
\begin{equation}
\left|\frac{4q^2}{4q^2(2n+1)^2-(2p+1)^2(2m+1)^2}\right|\leq
\phi(m,n)\,.
\end{equation}
Thus, we obtain the upper bound
\begin{equation}
\left\Vert L^{-1}_\Omega P_{m,n}\right\Vert\leq \phi(m,n)\left\Vert P_{m,n}\right\Vert\,.
\end{equation}
The function $\phi$ is nonincreasing in $m$ and $n$. As a result, for any $v \in X_{m,n}$ it holds
\begin{equation}
\left\Vert L^{-1}_\Omega v\right\Vert\leq \phi(m,n)\Vert v\Vert\,.
\end{equation}
In particular, since $X_{0,0}=X$, we have
\begin{equation}
\left\Vert L^{-1}_\Omega\right\Vert\leq \phi(0,0)=\frac{4q^2}{4p+1}\,.
\end{equation}
\end{proof}

The final lemma gives us control over the way in which the first part of operator $H_0$, see Eq.~\eqref{eq:H0}, acts on higher modes.

\begin{lemma}\label{lem:uvP_norm}
    Let $v\in X$ such that
    \begin{equation}\label{eq:25.06.03_01}
    v=\sum_{\substack{0\leq m< M\\ 0\leq n<N}}\hat{v}_{m,n}P_{m,n}\,.
    \end{equation}
    Assume that $m\geq \hat{M}$ or $n\geq \hat{N}$, where
    \begin{equation}\label{eq:25.06.03_02}
    \hat{M}= 2M-1 \,,\qquad \hat{N}= 2N-1 \,.
    \end{equation} 
    Then it holds
    \begin{equation}
    \left\Vert L_\Omega^{-1}\left(v^2 P_{m,n}\right) \right\Vert 
    \leq \max\left(\phi(m-\hat{M},0),\phi(0,n-\hat{N})\right) \rhoOne^{2(\hat{M}+m)+1}\rhoTwo^{2(\hat{N}+n)+1}\sum_{\hat{m}=0}^{2\hat{M}}\sum_{\hat{n}=0}^{2\hat{N}}\left| \tilde{c}_{\hat{m},\hat{n}} \right|\,,
    \end{equation}
    where the coefficients $\tilde{c}_{\hat{m},\hat{n}}$ are defined in \eqref{eq:uvP_decompositionMN} and depend only on $v$.
\end{lemma}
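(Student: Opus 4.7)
The plan is to expand $v^{2}P_{m,n}$ as an explicit finite linear combination of basis functions, apply $L_\Omega^{-1}$ term-by-term using Lemma~\ref{lem:L-1}, and exploit the monotonicity of $\phi$ under the hypothesis that $m\ge\hat M$ or $n\ge\hat N$.

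First I would substitute the finite expansion \eqref{eq:25.06.03_01} of $v$ to write
\[
v^{2}\,P_{m,n} = \sum_{\substack{0\le m_1,m_2<M\\ 0\le n_1,n_2<N}} \hat v_{m_1,n_1}\,\hat v_{m_2,n_2}\,P_{m_1,n_1}\,P_{m_2,n_2}\,P_{m,n}\,,
\]
and then apply the triple-product identity displayed in the proof of Lemma~\ref{lem:triple_norm} (with the third factor fixed to $P_{m,n}$) to expand every summand into $16$ basis functions. After invoking the reflection rules \eqref{eqn:negative_P} for any negative indices and collecting terms, $v^{2}P_{m,n}$ becomes a finite sum of $P_{\tilde m,\tilde n}$ whose indices lie in a window of half-width $(\hat M,\hat N)$ around $(m,n)$. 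Relabelling this window via the shift $(\hat m,\hat n)\mapsto(\hat m - \hat M + m,\hat n - \hat N + n)$ produces the coefficients $\tilde c_{\hat m,\hat n}$ of \eqref{eq:uvP_decompositionMN}, with $0\le\hat m\le 2\hat M$ and $0\le\hat n\le 2\hat N$.

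Next I would apply $L_\Omega^{-1}$ to each basis function in this decomposition and control it via Lemma~\ref{lem:L-1}. Under the assumption $m\ge\hat M$ or $n\ge\hat N$, no index reflection in the corresponding coordinate is triggered, so each $P_{\tilde m,\tilde n}$ appearing satisfies either $\tilde m\ge m-\hat M$ or $\tilde n\ge n-\hat N$. Since $\phi$ is nonincreasing in each argument, this yields
\[
\phi(\tilde m,\tilde n)\le \max\bigl(\phi(m-\hat M,0),\,\phi(0,n-\hat N)\bigr).
\]
Combining this with the elementary estimate $\Vert P_{\tilde m,\tilde n}\Vert = \rho_\tau^{2\tilde m+1}\rho_x^{2\tilde n+1}\le \rho_\tau^{2(\hat M+m)+1}\rho_x^{2(\hat N+n)+1}$, saturated at the upper-right corner of the window, and with the triangle inequality in $X$, summing over $\hat m$ and $\hat n$ delivers the claimed bound.

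The main obstacle is the combinatorial index bookkeeping in the expansion step: one has to verify that after applying \eqref{eqn:negative_P} to each of the $16$ terms arising from every pair $(m_1,n_1),(m_2,n_2)$, the resulting indices fit inside the asserted window and that the labelling $\tilde c_{\hat m,\hat n}$ is consistent with the formula \eqref{eq:uvP_decompositionMN} referenced in the statement. Once this is done, the monotonicity of $\phi$ and the corner weight estimate render the remainder of the argument essentially routine.
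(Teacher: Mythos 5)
Your proposal is correct and follows essentially the same route as the paper: expand $v^2P_{m,n}$ into the shifted window of basis functions with the coefficients $\tilde c_{\hat m,\hat n}$ of \eqref{eq:uvP_decompositionMN}, bound each weight by the corner value $\rhoOne^{2(\hat M+m)+1}\rhoTwo^{2(\hat N+n)+1}$, and use Lemma~\ref{lem:L-1} together with the monotonicity of $\phi$ under the hypothesis $m\geq\hat M$ or $n\geq\hat N$. The only cosmetic difference is that you apply the $\phi$-bound term by term, whereas the paper notes $v^2P_{m,n}\in X_{m-\hat M,0}$ (resp.\ $X_{0,n-\hat N}$) and invokes Lemma~\ref{lem:L-1} at the subspace level before taking the maximum of the two cases.
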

\begin{proof}
    For $v$ given in \eqref{eq:25.06.03_01} and $P_{\hat{M},\hat{N}}$ with \eqref{eq:25.06.03_02}, we have the following decomposition
    \begin{align}\label{eq:uvP_decompositionMN}
        v^2P_{\hat{M},\hat{N}}= \sum_{\hat{m}=0}^{2\hat{M}}\sum_{\hat{n}=0}^{2\hat{N}}\tilde{c}_{\hat{m},\hat{n}}P_{\hat{m},\hat{n}}\,,
    \end{align}
    where the coefficients $\tilde{c}_{\hat{m},\hat{n}}$ depend only on $v$. For any other pair of natural numbers $(m,n)$ we can write
    \begin{align}\label{eq:uvP_decomposition}
        v^2P_{m,n}=\sum_{\hat{m}=0}^{2\hat{M}}\sum_{\hat{n}=0}^{2\hat{N}}\tilde{c}_{\hat{m},\hat{n}} P_{\hat{m}+m-\hat{M},\hat{n}+n-\hat{N}}\,.
    \end{align}
    Let us point out that when $m<\hat{M}$ or $n<\hat{N}$, then this sum contains basis functions with negative indices. They are treated accordingly to \eqref{eqn:negative_P}. 
    
    Since $\Vert P_{m,n}\Vert=\rhoOne^{2m+1}\rhoTwo^{2n+1}$ we also have the bound
    \begin{align}
        \left\Vert v^{2}\,P_{m,n}\right\Vert&\leq\sum_{\hat{m}=0}^{2\hat{M}}\sum_{\hat{n}=0}^{2\hat{N}}\rhoOne^{2(\hat{m}+m-\hat{M})+1}\rhoTwo^{2(\hat{n}+n-\hat{N})+1} \left|\tilde{c}_{\hat{m},\hat{n}}\right|\\
        &\leq \rhoOne^{2(\hat{M}+m)+1}\rhoTwo^{2(\hat{N}+n)+1}  \sum_{\hat{m}=0}^{2\hat{M}}\sum_{\hat{n}=0}^{2\hat{N}}\left|\tilde{c}_{\hat{m},\hat{n}}\right| \,.
    \end{align}
    Now assume that $m\geq \tilde{M}$, then $v^2P_{m,n}\in X_{m-\hat{M},0}$ and by Lemma \ref{lem:L-1} we have
    \begin{align}
    \label{eq:25.05.20_01}
     \left\Vert L^{-1}_{\Omega}\left(v^2 P_{m,n}\right)\right\Vert&\leq
     \phi(m-\hat{M},0) \rhoOne^{2(\hat{M}+m)+1}\rhoTwo^{2(\hat{N}+n)+1} \sum_{\hat{m}=0}^{2\hat{M}}\sum_{\hat{n}=0}^{2\hat{N}} \left| \tilde{c}_{\hat{m},\hat{n}} \right|\,.
    \end{align}    
    Analogously, for $n\geq \hat{N}$, it holds
    \begin{align}
    \label{eq:25.05.20_02}
     \left\Vert L^{-1}_{\Omega}\left(v^2 P_{m,n}\right)\right\Vert&\leq
     \phi(0,n-\hat{N}) \rhoOne^{2(\hat{M}+m)+1}\rhoTwo^{2(\hat{N}+n)+1} \sum_{\hat{m}=0}^{2\hat{M}}\sum_{\hat{n}=0}^{2\hat{N}} \left| \tilde{c}_{\hat{m},\hat{n}} \right|\,.
    \end{align}        
\end{proof}

\section{Construction of $A$}\label{sec:A}
In order for the argument presented in Section~\ref{sec:Strategy} to close, the norm of the operator $H_0$ needs to be smaller than one. It can be achieved by a proper choice of the operator $A$. In Supplemental Material, we provide explicit expressions for this operator that are used in the proof. The goal of this section is to present a reasoning that brought us to them.

For any fixed $u_0=u_0^{(i)}$ we construct the operator $A$ as a finite-dimensional approximation of the inverse of operator $I+3L_\Omega^{-1}\circ\Lambda_{u_0^2}$, where $\Lambda_{u_0^2}$ is a pointwise multiplication, i.e., $\Lambda_{u_0^2}h=u_0^2h$. One can easily check that for $A=(I+3L_\Omega^{-1}\circ\Lambda_{u_0^2})^{-1}$ we would get $H_0=0$. Unfortunately, getting an explicit expression for such $A$ does not seem to be possible. Thus, we construct an approximation as follows.

For fixed natural numbers $\mu$ and $\nu$, we denote the projection onto $\Yfin_{\mu,\nu}$ by $\Pi_{\mu,\nu}$:
\begin{equation}
    \Pi_{\mu,\nu}u=\sum_{\substack{0\leq m < \mu\\ 0\leq n < \nu}}u_{m,n}P_{m,n}\,.
\end{equation}
Let us define an auxiliary linear operator $\tilde{\mathcal{A}}:\Yfin_{\mu,\nu}\to Y_{\mu,\nu}$ as
\begin{equation}
    \tilde{\mathcal{A}}:=I+3\Pi_{\mu,\nu}\circ L_\Omega^{-1}\circ \Lambda_{u_0^2}\,.
\end{equation}
It can be presented in the basis consisting of $P_{m,n}$ with $0\leq m <\mu$ and $0\leq n < \nu$ as a matrix of dimension $\mu\,\nu\times\mu\nu$. Then one could, in principle, explicitly calculate its inverse and define $\mathcal{A}=\tilde{\mathcal{A}}^{-1}$. However, as discussed in Section~\ref{sec:ComputerAssistance}, for fixed $\mu$ and $\nu$, $\tilde{\mathcal{A}}$ is constructed as a matrix with entries being rational numbers. Because of this, calculating its inverse may be bothersome and produce $\mathcal{A}$ filled with rational numbers with unfeasibly large numerators and denominators, complicating further computations. To circumnavigate this issue, we approximate matrix elements of $\tilde{\mathcal{A}}$ as floating-point numbers, find an approximate inverse of this matrix using standard libraries, and use rational approximations of the obtained entries to define $\mathcal{A}$. This procedure gives us a matrix (listed in Supplemental Materials) that is not an exact inverse of $\tilde{\mathcal{A}}$, but is sufficiently close to it for our needs.

Having proper matrix $\mathcal{A}$, finally we define $A$ as a block-diagonal operator
\begin{equation}
\label{eq:25.05.21_01}
A=\left(\begin{array}{@{}c|c@{}}
  \mathcal{A}
  & 0 \\
\hline
  0 & I
\end{array}\right)\,.
\end{equation}
Let us note that $\mu$ and $\nu$ are free parameters. One can expect that the larger they are, the better approximation of $(I+3L_\Omega^{-1}\circ\Lambda_{u_0^2})^{-1}$ the operator $A$ is. This can be measured by the size of the resulting $\Vert H_0\Vert$. For our proof, we choose $\mu=\nu$ as a minimal value for which $\Vert H_0\Vert<1$. 

Due to the block structure, we have $A P_{m,n}=\mathcal{A}P_{m,n}$ for $P_{m,n}\in \Yfin_{\mu,\nu}$, while $A P_{m,n}=P_{m,n}$ when $P_{m,n}\in X_{\mu,0}$ or $P_{m,n}\in X_{0,\nu}$. The norm of operator $A$ can then be bounded with the use of Lemma \ref{lem:lin_norm} as
\begin{equation}
\label{eq:25.05.21_03}
\Vert A \Vert \leq \max\left(\max_{\substack{0\leq m<\mu\\ 0\leq n<\nu}} \frac{\left\Vert \mathcal{A}P_{m,n}\right\Vert}{\rhoOne^{2m+1}\rhoTwo^{2n+1}},1\right)\,.
\end{equation}

\section{Bound on $\Vert H_{0}\Vert$}\label{sec:H0}
Let $u_0$ be given by a finite trigonometric polynomial
\begin{equation}
    \label{eq:25.05.22_01}
    u_0=\sum_{\substack{0\leq m< M\\ 0\leq n< N}}c_{m,n} P_{m,n}
\end{equation}
and $A$ be constructed as described in the previous section. For any fixed $\tilde{M}\geq \max(\mu, 2M-1)$ and $\tilde{N}\geq \max(\nu,2N-1)$, Lemma \ref{lem:lin_norm} gives us the bound
\begin{equation}\label{eq:H0_bound_proof}
    \Vert H_0\Vert\leq \max\left(\max_{\substack{0\leq m< \tilde{M}\\0\leq n<\tilde{N}}} \frac{\left\Vert H_0 (P_{m,n})\right\Vert}{\rhoOne^{2m+1}\rhoTwo^{2n+1}},\sup_{m\geq \tilde{M} \,\lor\, n\geq\tilde{N}} \frac{\left\Vert H_0 (P_{m,n})\right\Vert}{\rhoOne^{2m+1}\rhoTwo^{2n+1}}\right)\,.
\end{equation}
The first part inside the maximum can be calculated explicitly; hence, we only need to find bounds on the remaining supremum. However, for indices included there, it holds $AP_{m,n}=P_{m,n}$, so we simply have
\begin{equation}
    H_0 (P_{m,n})=-3L^{-1}_\Omega (u_0^2\,P_{m,n})\,.
\end{equation}
As a result, we can use Lemma \ref{lem:uvP_norm} to get 
\begin{align}\label{eq:25.05.20_03}
    \Vert H_0\Vert=\max\left(\max_{\substack{0\leq m< \tilde{M}\\0\leq n<\tilde{N}}} \frac{\left\Vert H_0 (P_{m,n})\right\Vert}{\rhoOne^{2m+1}\rhoTwo^{2n+1}}, \phi(\tilde{M}-(2M-1),0) C,\phi(0,\tilde{N}-(2N-1)) C\right)\,,
\end{align}
where
\begin{align}\label{eq:25.05.20_04}
   C=3\rhoOne^{4M-2}\rhoTwo^{4N-2} \sum_{\tilde{m}=0}^{4M-2}\sum_{\tilde{n}=0}^{4N-2}\left| \tilde{c}_{\tilde{m},\tilde{n}} \right|\,.
\end{align}
The coefficients $\tilde{c}_{\tilde{m},\tilde{n}}$ can be read from the decomposition
\begin{align}
    \label{eq:25.06.03_03}
    u_0^2\,P_{2M-1,2N-1}= \sum_{\tilde{m}=0}^{4M-2}\sum_{\tilde{n}=0}^{4N-2}\tilde{c}_{\tilde{m},\tilde{n}}P_{\tilde{m},\tilde{n}}\,.
\end{align}
see \eqref{eq:uvP_decompositionMN}.

\section{Computer assistance}
\label{sec:ComputerAssistance}

This section outlines the computational aspects of the proof, which rigorously verify the hypotheses of Theorem~\ref{thm:cap}, using exclusively rational arithmetic.
Given the rational Fourier coefficients of the approximate solution $u_0$---truncated to a finite number of temporal and spatial modes \eqref{eq:25.05.22_01}---and the frequency $\Omega$ of the form given in~\eqref{eq:omega}, the objective is to confirm the conditions \eqref{eq:25.05.21_05} that guarantee the existence of a true solution in a neighbourhood of $u_0$, as quantified by the weighted $\ell^1$ norm.

The procedure begins by evaluating the norm $\left\Vert u_0\right\Vert$ and the norm bound on the linear operator A, as defined in \eqref{eq:25.05.21_03}, for fixed weights $\rhoOne$ and $\rhoTwo$. We then compute the Fourier coefficients of $u_0^2$ and $u_0^3$ using trigonometric identities and convolution formulas for Fourier series. These are used to construct $\mathcal{N}_\Omega(0)$, defined in \eqref{eq:fp_2}, and to evaluate its norm.

Next, we compute the norm bound on the operator $H_0$ using~\eqref{eq:25.05.20_03}. It involves a max expression, where the first term concerns $P_{m,n}$ in $\Yfin_{\tilde{M},\tilde{N}}$ subspace. This can be split into two cases: the first with arguments in $\Yfin_{\mu,\nu}$ for which $A = \mathcal{A}$ and the second for the remaining elements of $\Yfin_{\tilde{M},\tilde{N}}$ where $A = I$. These cases are computed separately; the latter requires checking a larger number of expressions and therefore accounts for the majority of the computational effort. The remaining terms in \eqref{eq:25.05.20_03} are computed straightforwardly.

The choice of truncation parameters $M$, $N$, $\mu$, $\nu$, $\tilde{M}$, and $\tilde{N}$ is governed by the need to ensure all bounds fall within the thresholds required by the Theorem~\ref{thm:cap}. The number of Fourier modes in the approximation $u_0$, $M$ temporal and $N$ spatial, is selected so that the norm of $\mathcal{N}_\Omega(0)$ is sufficiently small; in other words, $u_{0}$ is sufficiently close to the true solution. The size of the matrix block $\mathcal{A}\in\mathbb{R}^{\mu\nu\times \mu\nu}$, defined in Section~\ref{sec:A}, is chosen individually for each solution to ensure that the first term in \eqref{eq:25.05.20_03} remains strictly below 1. The parameters $\tilde{M}$ and $\tilde{N}$ are chosen analogously, to bound the remaining two terms in~\eqref{eq:25.05.20_03}.

Finally, we verify whether the inequalities \eqref{eq:25.05.21_05} are satisfied for the provided values of $K_0$ and $\delta$ (which are computed based on the already available data). If these hold, the existence of a true solution near each approximate $u_0^{(i)}$ is thereby established. A separate verification is then performed to confirm that these solutions are distinct from one another, completing the proof.

More explicit formulas, including precise definitions of all computed quantities and norm expressions, are provided in Appendix~\ref{sec:DetailsOfTheComputerCode}.

\section{Proof of Theorem 2}\label{sec:proof}

\begin{proof}[Proof of Theorem~\ref{thm:cap}]
We consider the case $\Omega = 69/40$ and the three approximate solutions $\{ u_0^{(i)} \}_{i=1}^3$ with the corresponding matrices $\mathcal{A}^{(i)}$ listed in the Supplemental Material. For each $i$, the hypotheses of Theorem~\ref{thm:cap} are verified using rigorously computed bounds on the relevant quantities. All computations are performed in exact rational arithmetic and yield the following results:

\begin{itemize}
\item 1st solution: $u_{0}=u_{0}^{(1)}$
\begin{align}
\left\Vert u_{0}\right\Vert &< \frac{14771161379}{5719729173} \,,
\\
\left\Vert A \right\Vert & < \frac{56993842159}{3536278048} \,,
\\
\left\Vert H_0 \right\Vert & < \frac{10426318695}{10439379622} \,,
\\
K_{0} &= \frac{6488907641}{6496696266} \,,
\\
\delta &= \frac{530080}{476772484993687} \,,
\end{align}

\item 2nd solution: $u_{0}=u_{0}^{(2)}$
\begin{align}
\left\Vert u_{0}\right\Vert &< \frac{49511716195}{18983051418} \,,
\\
\left\Vert A \right\Vert & < \frac{31937835596}{2007878747} \,,
\\
\left\Vert H_0 \right\Vert & < \frac{6036122721}{6038803058} \,,
\\
K_{0} &= \frac{5478308396}{5480517807} \,,
\\
\delta &= \frac{110731}{125765612890970} \,,
\end{align}

\item 3rd solution: $u_{0}=u_{0}^{(3)}$
\begin{align}
\left\Vert u_{0}\right\Vert &< \frac{8666442879}{3931226470} \,,
\\
\left\Vert A \right\Vert & < \frac{41051476037}{3576023091} \,,
\\
\left\Vert H_0 \right\Vert & < \frac{5350490449}{5358606877} \,,
\\
K_{0} &= \frac{3117063509}{3120585438} \,,
\\
\delta &= \frac{142842}{7532565418067} \,.
\end{align}
\end{itemize}

Given these values, we apply the fixed-point argument described in Section~\ref{sec:FixedPointFormulation} and conclude that, for each $i = 1, 2, 3$, there exists a true solution $u^{(i)}$ to Eq.~\eqref{eq:nlw} such that
$\| u^{(i)} - u_0^{(i)} \| < \varepsilon^{(i)}$, where
\begin{equation}
\label{eq:25.05.21_02}
\varepsilon^{(1)}= 1.79189\ldots\,\times 10^{-8} \,,
\quad
\varepsilon^{(2)}= 1.40047\ldots\,\times 10^{-8} \,,
\quad
\varepsilon^{(3)}=2.17692\ldots\,\times 10^{-7} \,.
\end{equation}
These latter bounds are computed based on the quantities listed above.

Finally, we verify that inequalities \eqref{eq:u-u0} hold for all pairs of distinct $i,j\in\{1,2,3\}$. It is done by simply calculating $\| u^{(i)} \pm u^{(j)} \|$ and using the values of $\varepsilon$ provided above.

\end{proof}

\appendix

\vspace{8ex}

\section{Description of auxiliary files}
\label{sec:InteractionCoefficients}

To support verification and reproducibility of the computer-assisted proof, we provide a complete set of supplementary files, consisting of \textit{Wolfram Mathematica} \cite{Mathematica} scripts and precomputed data files. These include implementation code, input data, and scripts configured to verify the operator norm bounds and the inequality appearing in the Theorem~\ref{thm:cap}.

\subsection{Code}

The following \textit{Wolfram Mathematica} files are included as the Supplemental Material:
\begin{itemize}
\item \codett{CAPCode.m}: Contains auxiliary functions used in the implementation of the proof procedure. The norm weight parameters \codett{RhoTau} and \codett{RhoX} are defined there.
\item \codett{CAPRun_69_40_1st.m}: A run script tailored to the first solution. It loads \codett{CAPCode.m}, sets all required parameters, reads the corresponding data files, and verifies the bounds and main inequality \eqref{eq:25.05.21_05}. This script constitutes the execution of the proof.
\item \codett{CAPRun_69_40_2nd.m}: As above, for the second solution.
\item \codett{CAPRun_69_40_3rd.m}: As above, for the third solution.
\item \codett{CAPRun_69_40_dist.m}: A script that performs the verification of condition \eqref{eq:u-u0}. It loads the Fourier coefficients of $u_{0}$, and the corresponding matrix elements of $\mathcal{A}$ for all three solutions and evaluates the condition using predefined parameters $\delta$. 
\item \codett{CAPConstructAcal.m}: A script which constructs the matrix $\mathcal{A}$ according to the procedure outlined in Section~\ref{sec:A}, see also below. The input consists of the Fourier coefficients of $u_0$ and the desired size of the generated matrix. The provided script is configured to analyse $u_0^{(2)}$; other solutions can be treated analogously by adjusting the input data.
\end{itemize}

Each \codett{CAPRun_*.m} script begins with a block of parameter definitions:
\begin{itemize}
\item The frequency via integers \codett{p0} and \codett{q0}, with \codett{Omega0 = (2 p0 + 1)/(2 q0)}, cf. \eqref{eq:omega},
\item The path to the directory \codett{dirIn} where data files reside,
\item Operator bound parameters \codett{K0} and \codett{delta},
\end{itemize}

Input data and computations are performed using rational arithmetic. This guarantees that no rounding errors affect the results. 
Because the arithmetic operations generate rational numbers with lengthy numerators and denominators, and because the total number of operations is substantial (particularly for solutions on branches), the code is designed to be executed in parallel using \textit{Mathematica}'s data parallelism functionality (specifically using \codett{ParallelTable} and \codett{ParallelDo} built-in functions). Accordingly, parallel kernels should be initialized at the beginning of each \codett{CAPRun_*.m} script by calling the \codett{LaunchKernels}.
In contrast, the script \codett{CAPRun_69_40_dist.m} that performs a simple check of \eqref{eq:u-u0} runs in serial.

In addition, we provide the script \codett{CAPConstructAcal.m}, which contains the code used to construct the matrix $\mathcal{A}$ and may be used to independently verify the provided data. The construction, described in Section~\ref{sec:A}, is carried out in high-precision floating-point arithmetic, which is controlled at the software level and thus independent of the underlying hardware architecture. This approach eliminates the influence of rounding errors, which may otherwise vary across machines when standard double-precision is used instead. The resulting matrix is subsequently converted to exact rational numbers.

\subsection{Data}
The included data files provide the rational Fourier coefficients of the function $u_0$, and the corresponding matrix $\mathcal{A}$, for various approximate solutions. File naming follows the convention:
\begin{itemize}
	\item \codett{u0hat_<label>.m}: Fourier coefficients of $u_0$, see \eqref{eq:25.05.22_01}, as a nested list (representing a matrix) with $c_{m,n}$ saved in \codett{u0hat[[m+1,n+1]]},
	\item \codett{Acal_<label>.m}: elements of matrix $\mathcal{A}$, as a nested list with \codett{Acal[[i+1,j+1]]} corresponding to $\mathcal{A}_{ij}$,
\end{itemize}
where \codett{<label>} indicates the solution (e.g., 1st, 2nd, 3rd).
Although the text files store the data as nested lists following the \textit{Wolfram Mathematica} syntax, they can be opened with any standard text editor, and the data can be easily converted to other formats if desired.

The files must be placed in the directory specified by the \codett{dirIn} variable in the associated script. The script \codett{CAPRun_*.m} automatically loads and processes these files as part of the verification procedure.

\subsection{Output}

The primary output of each \codett{CAPRun_*.m} script includes:
\begin{itemize}
\item Each computed operator bound, printed to the screen both in exact rational form and as a machine-precision floating-point number for convenience,
\item The results of testing the inequalities appearing in Theorem~\ref{thm:cap},
\item Values of auxiliary quantities used during the verification process, printed for transparency and reproducibility.
\end{itemize}

No manual post-processing is required; all outputs are self-contained within the run log produced by the script. When run in the command line, as e.g.
\begin{verbatim}
$ math -script CAPRun_69_40_2nd.m | tee 69_40_2nd.log
\end{verbatim}
(assuming that \codett{tee} utility is available) the script prints output both to the terminal and to the file \codett{69_40_2nd.log}. Alternatively, the \codett{CAPRun*.m} file may be opened and executed directly within \textit{Wolfram Mathematica}, with results displayed in the output cells. However, these outputs are not automatically saved, so we recommend using the script-based execution.

The computations are memory-intensive, especially when constructing large symbolic expressions in rational arithmetic. Sufficient memory and parallel capacity are therefore necessary for timely completion.

For computations, we have used a heterogeneous 38-core Beowulf cluster, consisting of 4th to 8th generation Intel Core processors.
For reference, we provide execution times and memory usage\footnote{The simplest example should complete on a single kernel in just under 16 hours.} for the examples included:
\begin{itemize}
\item 1st solution ($M = N = 36$): 45.5 hours, 1290MB of RAM per kernel (48GB total),
\item 2nd solution ($M = N = 38$): 58.2 hours, 1462MB of RAM per kernel (54GB total),
\item 3rd solution ($M = N = 13$): 24.5 minutes, 347.36MB of RAM per kernel (13GB total). 
\end{itemize}

\section{Details of the computations}
\label{sec:DetailsOfTheComputerCode}

This section supplements the overview given in Section~\ref{sec:ComputerAssistance} with explicit formulas and conventions used in the implementation of the computer-assisted proof. We begin by presenting the expressions used to compute the Fourier coefficients of nonlinear terms derived from the approximate solution $u_0$. We then introduce the indexing convention used to transform between two-dimensional mode indices and a one-dimensional representation required for assembling the operator $\mathcal{A}$. Finally, we provide the formula used to compute the operator norm bound of $H_0$ in this notation.

\subsection{Fourier coefficients of nonlinear terms}

Let $u_0$ be the approximate solution given in \eqref{eq:25.05.22_01}. Using standard product identities for trigonometric functions, we compute the Fourier coefficients of $u_0^2$, $u_0^3$, and the action of the multiplication operator $\Lambda_{u_0^2}$ on a basis element $P_{m,n}$ as follows.

\subsubsection{Squared nonlinearity $u_0^2$} Using a discrete convolution of the coefficients $c_{m,n}$ of $u_0$ we find:
\begin{equation}
    \label{eq:25.05.22_02}
    u_0^{2} = \sum_{\substack{0\leq m< 2M\\ 0\leq n< 2N}}d_{m,n} P_{m,n}\,,
\end{equation}
with
\begin{multline}
    \label{eq:25.05.22_03}
    d_{m,n} = \frac{1}{4 (\delta _{0,m}+1) (\delta _{0,n}+1)} \left[ \sum _{m_1=\max (0,m-M)}^{\min (m-1,M-1)} \left(\sum _{n_1=0}^{-n+N-1}
   c_{m-m_1-1,n+n_1} c_{m_1,n_1}\right.\right.
   \\
   \left.+\sum _{n_1=n}^{N-1} c_{m-m_1-1,n_1-n} c_{m_1,n_1}
   -\sum _{n_1=\max (0,n-N)}^{\min (n-1,N-1)} c_{m-m_1-1,n-n_1-1} c_{m_1,n_1}\right)
   \\
   +\sum _{m_1=m}^{M-1} \left(\sum _{n_1=0}^{-n+N-1}
   c_{m_1,n_1} c_{m_1-m,n+n_1}+\sum
   _{n_1=n}^{N-1} c_{m_1,n_1} c_{m_1-m,n_1-n}\right.
   \\
   \left.-\sum _{n_1=\max (0,n-N)}^{\min (n-1,N-1)} c_{m_1,n_1} c_{m_1-m,n-n_1-1}\right)
   \\
   +\sum _{m_1=0}^{-m+M-1} \left(\sum _{n_1=0}^{-n+N-1}
   c_{m_1,n_1} c_{m+m_1,n+n_1}+\sum
   _{n_1=n}^{N-1} c_{m_1,n_1} c_{m+m_1,n_1-n}\right.
   \\
   \left.\left.-\sum
   _{n_1=\max (0,n-N)}^{\min (n-1,N-1)} c_{m_1,n_1} c_{m+m_1,n-n_1-1}\right)\right]\,.
\end{multline}

\subsubsection{Cubic nonlinearity $u_0^3$} Similarly, via convolution of $u_{0}^{2}$ with $u_{0}$ we get:
\begin{equation}
    \label{eq:25.05.22_04}
    u_0^{3} = \sum_{\substack{0\leq m< 3M-1\\ 0\leq n< 3N-1}}f_{m,n} P_{m,n}\,,
\end{equation}
where
\begin{multline}
    \label{eq:25.05.22_05}
    f_{m,n} = \frac{1}{4} \left[\sum _{m_1=\max (0,m-2 M+1)}^{\min (M-1,m)} \left(\sum _{n_1=\max
   (0,n-2 N+1)}^{\min (N-1,n)} d_{m-m_1,n-n_1} c_{m_1,n_1}\right.\right.
   \\
   \left.
   + \sum _{n_1=n}^{N-1} d_{m-m_1,n_1-n} c_{m_1,n_1} - \sum _{n_1=0}^{\min
   (N-1,-n+2 N-2)} d_{m-m_1,n+n_1+1} c_{m_1,n_1} \right)
   \\
   + \sum_{m_1=m}^{M-1} \left(\sum_{n_1=\max (0,n-2 N+1)}^{\min (N-1,n)} d_{m_1-m,n-n_1} c_{m_1,n_1}
   + \sum _{n_1=n}^{N-1} d_{m_1-m,n_1-n}
   \right.
   \\
   \left.
   - \sum _{n_1=0}^{\min(N-1,-n+2 N-2)} d_{m_1-m,n+n_1+1} c_{m_1,n_1}\right)
   \\
   + \sum_{m_1=0}^{\min (M-1,-m+2 M-2)} \left(\sum _{n_1=\max (0,n-2 N+1)}^{\min (N-1,n)}d_{m+m_1+1,n-n_1} c_{m_1,n_1}
   + \sum _{n_1=n}^{N-1} d_{m+m_1+1,n_1-n} c_{m_1,n_1}
   \right.
   \\
   \left.\left.
   - \sum _{n_1=0}^{\min (N-1,-n+2 N-2)} d_{m+m_1+1,n+n_1+1}c_{m_1,n_1}\right)
    \right]
   \,.
\end{multline}

\subsubsection{Action of $\Lambda_{u_0^2}$ on basis functions}
Using \eqref{eq:25.05.22_02} and \eqref{eq:25.05.22_03} we find
\begin{equation}
    \label{eq:25.05.22_06}
    \Lambda_{u_0^2}P_{m,n} = \sum_{\substack{0\leq m_{2}<2M+m\\ 0\leq n_{2}<2N+n}}g^{m,n}_{m_{2},n_{2}} P_{m_{2},n_{2}}\,,
\end{equation}
with
\begin{multline}
    \label{eq:25.05.22_07}
   g^{m,n}_{m_{2},n_{2}}=\frac{1}{4}\left(d_{m-m_2,n-n_2}+d_{m-m_2,n_2-n}-d_{m-m_2,n+n_2+1}
   \right.
   \\
   +d_{m_2-m,n-n_2}+d_{m_2-m,n_2-n}-d_{m_2-m,n+n_2+1}
   \\
   \left.+d_{m+m_2+1,n-n_2}+d_{m+m_2+1,n_2-n}-d_{m+m_2+1,n+n_2+1}\right)\,,
\end{multline}
where we assume $d_{m,n}=0$ whenever the indices fall outside the range defined in \eqref{eq:25.05.22_02}. Alternatively, the coefficients $g^{m,n}_{m_{2},n_{2}}$ can be computed using \eqref{eq:uvP_decompositionMN}-\eqref{eq:uvP_decomposition}, in practice, we opted for using \eqref{eq:25.05.22_06}-\eqref{eq:25.05.22_07} as it led to a slightly cleaner implementation and did not require storing an extra matrix. Recall that by definition the coefficients in \eqref{eq:25.06.03_03} are $\tilde{c}_{\tilde{m},\tilde{n}}=g^{2M-1,2N-1}_{\tilde{m},\tilde{n}}$.

\subsection{Indexing convention}

To efficiently store and manipulate the two-dimensional Fourier coefficients with indices $(m,n)$, we employ a one-dimensional indexing scheme. The map $J(m,n):\ \mathbb{N}\times\mathbb{N}\ni (m,n)\mapsto J\in\mathbb{N}$ is defined by:
\begin{equation}
    \label{eq:25.05.22_08}
    J(m,n) = \max (m,n)^2+\max (m,n)-m+n\,.
\end{equation}
The inverse map $(m,n)(J)$ is given by:
\begin{equation}
    \label{eq:25.05.22_09}
(m,n)(J) = \begin{cases}
      \left(\left\lfloor \sqrt{J}\right\rfloor ,J-\left\lfloor
   \sqrt{J}\right\rfloor ^2\right), & \text{if}\ J-\left\lfloor \sqrt{J}\right\rfloor ^2\leq \left\lceil \sqrt{J}\right\rceil
   ^2-J\,, \\
      \left(\left\lceil \sqrt{J}\right\rceil ^2-J-1,\left\lfloor
   \sqrt{J}\right\rfloor \right), & \text{otherwise}\,.
    \end{cases}
\end{equation}

This indexing scheme is used consistently in the construction of the matrix $\mathcal{A}\in\mathbb{R}^{J_{\mu}\times J_{\mu}}$, where  $J_{\mu}\equiv \mu^{2}$ (we consider truncations with $\nu=\mu$, cf. Section~\ref{sec:A}), and in the evaluation of operator bounds, see below.

\subsection{Evaluation of bound on $\Vert H_{0}\Vert$}

Given the value of $C$ computed according to \eqref{eq:25.05.20_04}, we determine suitable values of $\tilde{M}$ and $\tilde{N}$  by requiring that the last two terms in \eqref{eq:25.05.20_03} are each strictly less than one. In practice, we set both $\tilde{M}$ and $\tilde{N}$ equal to the smallest value satisfying this condition for both terms.

The first term in \eqref{eq:25.05.20_03} involves a maximum of two operator norms: one for the truncated operator $A=\mathcal{A}$, and the other for $A=I$ (see Section~\ref{sec:ComputerAssistance}). Both cases are expressed and evaluated using the one-dimensional indexing scheme introduced above. In the following, we restrict ourselves to approximate solutions \eqref{eq:25.05.22_01} with diagonal truncations $N=M$.

We begin by rewriting \eqref{eq:25.05.22_06} in terms of the one-dimensional index $K$:
\begin{equation}
    \label{eq:25.06.02_01}
    \Lambda_{u_{0}^2}P_{K} = \sum_{K_{2}=0}^{K_{2}^{\max}(K)-1}g^{K}_{K_{2}}P_{K_{2}}\,.
\end{equation}
where $K_{2}^{\max}(K)$ corresponds to the upper limits in \eqref{eq:25.05.22_06}, and we use the shorthand notation $P_{K}\equiv P_{m(K),n(K)}$, and $g^{K}_{K_{2}}\equiv g^{m(K),n(K)}_{m_{2}(K_{2}),n_{2}(K_{2})}$. Next, recalling the definition \eqref{eq:H0}, we compute $H_{0}P_{J}$ for $0\leq J<J_{\mu}$, i.e. for $0\leq m,n<\mu$, as follows
\begin{equation}
    \label{eq:25.06.02_02}
    H_{0}P_{J} = -3\sum_{K_{2}=0}^{J_{\mu}-1}\sum_{K=0}^{K_{2}^{\max}(K)-1}\mathcal{A}_{J,K_{2}}g^{K_{2}}_{K}L_{\Omega}^{-1}P_{K}+P_{J}- \sum_{K={0}}^{J_{m}-1}\mathcal{A}_{J,K}P_{K}\,.
\end{equation}
We rearrange the double sum by extending the upper bound to a fixed maximum and interchanging the summation
\begin{equation}
    \label{eq:25.06.02_03}
    H_{0}P_{J} = -3\sum_{K=0}^{K_{\max}-1}\sum_{K_{2}=0}^{J_{\mu}-1}\mathcal{A}_{J,K_{2}}g^{K_{2}}_{K}L_{\Omega}^{-1}P_{K}+P_{J}- \sum_{K={0}}^{J_{m}-1}\mathcal{A}_{J,K}P_{K}\,,
\end{equation}
where $K_{\max}=(2M+J_{\mu}^{1/2}-1)^{2}$, and we use the convention that $g^{K_{2}}_{K}=0$ whenever $K_{2}\geq K_{2}^{\max}(K)$. Thus
\begin{multline}
    \label{eq:25.06.02_04}
    \Vert H_{0}P_{J} \Vert = \frac{1}{\rho(J)}\left(\sum_{K=0}^{J_{m}-1} \rho(K) \left| -3\sum_{K_{2}=0}^{J_{m}-1}\mathcal{A}_{J,K_{2}}g^{K_{2}}_{K}L_{\Omega}^{-1}(K) + \delta_{JK}- \mathcal{A}_{J,K} \right| 
    \right.
    \\
    \left.
    + \sum_{K=J_{m}}^{K_{\max}-1}\rho(K)\left| -3\sum_{K_{2}=0}^{J_{m}-1}\mathcal{A}_{J,K_{2}}g^{K_{2}}_{K}L_{\Omega}^{-1}(K) \right|
    \right)\,, \quad 0\leq J<J_{\mu}\,,
\end{multline}
where $\rho(K):=\rhoOne^{2m(K)+1}\rhoTwo^{2n(K)+1}$, and 
\begin{equation}
    L_{\Omega}^{-1}(K):=\frac{1}{(2n(K)+1)^2-\Omega^2 (2m(K)+1)^2}\,,
\end{equation}
with $(m,n)(K)$ defined in \eqref{eq:25.05.22_09}. For $m$ and $n$ in $[\mu, \tilde{M}-1]$, corresponding to $J_{\mu}\leq J<\widetilde{J}-1$, with $\widetilde{J}=\tilde{M}^{2}$, we have:
\begin{equation}
	\label{eq:25.06.02_05}
	\Vert H_{0} P_{J}\Vert = \frac{1}{\rho(J)} \sum_{J_{2}=0}^{J_{\max}(J)-1}\rho(J_{2})\left| -3g^{J}_{J_{2}}L_{\Omega}^{-1}(J_{2}) \right|\,, \quad J_{\mu}\leq J<\widetilde{J}\,.
\end{equation}
with $J_{\max}(J) = \left[2M+\max(m(J),n(J))\right]^{2}$.
Thus, the contribution of the first term in \eqref{eq:25.05.20_03} is given by the maximum over the expressions \eqref{eq:25.06.02_04} and \eqref{eq:25.06.02_05}.
This completes the computation of the upper bound for the operator norm $\Vert H_{0}\Vert$.

We remark that no attempt was made to optimize the weights in the norm; in all computations, we simply take $\rho_\tau = \rho_x = 1 + 10^{-20}$.

\medskip

\medskip

\medskip

\printbibliography

\end{document}